\def\R{\mathbb{R}}
\newcommand{\expn}{\operatorname{e}}
\newcommand{\im}{\operatorname{im}}
\newcommand{\diag}{\operatorname{diag}}
\newcommand{\beq}{\begin{equation}}
\newcommand{\eeq}{\end{equation}}
\newcommand {\mat}      [1] {\left[\begin{array}{#1}}
\newcommand {\rix}          {\end{array}\right]}
\newcommand {\smat}      [1] {\left[\begin{smallmatrix}{#1}}
\newcommand {\srix}          {\end{smallmatrix}\right]}
\newcommand {\s}      [1] {\begin{smallmatrix}{#1}}
\newcommand {\se}          {\end{smallmatrix}}
\newcommand{\trace}{\operatorname{tr}}
\newtheorem{defn}{Definition}[section]
\newtheorem{remark}[defn]{Remark}
\newtheorem{example}[defn]{Example}
\newtheorem{lem}[defn]{Lemma}
\newtheorem{prop}[defn]{Proposition} 
\newtheorem{kor}[defn]{Corollary}
\newtheorem{thm}[defn]{Theorem}
\title{Model reduction for stochastic systems with nonlinear drift}
\author{Martin Redmann\thanks{Martin Luther University Halle-Wittenberg, Institute of Mathematics, Theodor-Lieser-Str. 5, 06120 Halle (Saale), Germany, Email: {\tt 
martin.redmann@mathematik.uni-halle.de}.}
}
\begin{document}

\maketitle

\begin{abstract}
In this paper, we study dimension reduction techniques for large-scale controlled stochastic differential equations (SDEs). The drift of the considered SDEs contains a polynomial term satisfying a one-sided growth condition. Such nonlinearities in high dimensional settings occur, e.g., when stochastic reaction diffusion equations are discretized in space. We provide a brief discussion around existence, uniqueness and stability of solutions. (Almost) stability then is the basis for new concepts of Gramians that we introduce and study in this work. With the help of these Gramians, dominant subspace are identified leading to a balancing related highly accurate reduced order SDE. We provide an algebraic error criterion and an error analysis of the propose model reduction schemes. The paper is concluded by applying our method to spatially discretized reaction diffusion equations.
\end{abstract}

\textbf{Keywords:} model order reduction$\cdot$ nonlinear stochastic systems $\cdot$ Gramians $\cdot$ L\'evy processes

\noindent\textbf{MSC classification:} 60G51 $\cdot$ 60H10  $\cdot$ 65C30 $\cdot$ 93C10  $\cdot$ 93E03 $\cdot$ 93E15

\pagestyle{myheadings}
\thispagestyle{plain}
\markboth{M. REDMANN}{MODEL REDUCTION FOR STOCHASTIC SYSTEMS WITH NONLINEAR DRIFT}


\section{Introduction}

Model order reduction (MOR) aims to find low-order approximations for high-/infinite-dimensional systems of differential equations reducing the complexity of the original problem. Many MOR schemes are based on projections (Galerkin or Petrov-Galerkin type). In this context, the first goal is to identify solution manifolds and approximate them by low-dimensional linear subspaces. A reduced state variable, taking values in this subspace, is subsequently constructed in order to ensure an accurate estimation of the original dynamics. There is a rich selection of different MOR strategies. Proper orthogonal decomposition (POD) \cite{pod} is an approach, where solution spaces are learned from data. Methods like the iterative rational Krylov algorithm (IRKA) \cite{irka} rely on interpolation or on the minimization of certain error measures between systems. Moreover, there are Gramian based techniques like balanced truncation (BT) \cite{moo1981}, where dominant subspaces of the original dynamics are associated to eigenspaces of these (algebraic) Gramians. Recently, there has been an enormous interest in dimension reduction for large-scale nonlinear systems. Data-driven \cite{gosea_antoulas, pod_kramer, qian_data} or interpolation/optimization based methods \cite{breiten_benner2, nonlinear_irka} were applied to such equations in a deterministic framework. Generalizing BT to nonlinear systems was first addressed in \cite{Scherpen}. Alternatives, where the reduced order model can be computed easier, can be found in \cite{morBenG17, Kramer2022}. \smallskip

MOR in probabilistic settings is even more essential than in the deterministic context discussed above. This is due to an enormous amount of system evaluations required, e.g., for conducting Monte-Carlo simulations. On the other hand, it is also about the feasibility of certain algorithms. E.g., a stochastic differential equation (SDE) in dimension $n$ is in some sense equivalent to a partial differential equation (PDE) with $n$ spatial variables using the formula of Feynman-Kac. Knowing how hard it is to solve high-dimensional PDEs in general, it becomes clear how vital MOR for SDEs is. A POD approach for SDEs is studied in \cite{pod_sde}. Balancing related or optimization based MOR techniques are, for instance, investigated in \cite{beckerhartmann,bennerdammcruz, redmannbenner, mliopt} for the linear case. The advantage of the latter schemes is the possibility for a detailed error and stability analysis. However, an extension to nonlinear stochastic systems seems very challenging. A first approach for stochastic bilinear equations is presented in \cite{redstochbil} but it might not work for more complex nonlinearities.\smallskip

The goal of this paper is to extend BT to stochastic systems, e.g., with certain polynomial nonlinearities. In the deterministic case, a wide focus is on quadratic systems, see for instance \cite{morBenG17, Kramer2022}. This is because many nonlinear terms in a differential equation can be transformed to a quadratic expression using additional dummy variables. This approach is called lifting in the literature. It has the advantage that a large set of nonlinear systems can be covered if we know how to handle quadratic ones. However, this is also the drawback of this ansatz, since differential equations involving quadratic terms range from globally stable to finite time explosion systems, i.e., the existence of a global solution is not guaranteed. This large variety of properties makes it seem infeasible to develop a general theory like for example an error analysis with sharp bounds. For that reason, we do not intend to apply the technique of lifting the dynamics to a quadratic system in this paper, because one might loose track of essential properties that are usually not visible anymore in a transformed SDE. Instead we exploit the structure of our locally Lipschitz nonlinearity that we assume to be of one-sided linear growth. This also involves interesting polynomials that play a role in reaction diffusion equations. This type of growth will be reflected linearly in the associated Lyapunov operator that defines the Gramians that we propose in our MOR procedure.\smallskip

The paper is now structured as follows. Section \ref{stochstabgen} deals with the setting and the first details concerning the goals of this work. In Section \ref{sec3}, we recall facts about existence and uniqueness of solutions to the considered nonlinear SDE. We further investigate global asymptotic stability as the basis of the Gramians that we introduce in Section \ref{sec:reach}. There, it is explained and reasoned how Gramians need to be chosen in order to find a good dominant subspace characterization and hence an accurate reduced system. We also discuss on properties of Gramians that need to be fulfilled to ensure the classical error bound for BT known for deterministic linear systems \cite{BT_bound_enns, BT_bound_glover}. Having computed the desired Gramians based on the strategy that we provide, we explain how to compute the reduced system in Section \ref{sec_BT}. Finally, Section \ref{sec_error_analysis} delivers an error bound analysis for the balancing related MOR scheme, also involving a discussion on criteria for a high approximation quality. Section \ref{sec_sim} illustrates the performance of the MOR technique by applying it to spatially discretized stochastic reaction diffusion equations.

\section{Setting, notation and goal}\label{stochstabgen}

Let $\left(\Omega, \mathcal F, (\mathcal F_t)_{t\in [0, T]}, \mathbb P\right)$\footnote{$(\mathcal F_t)_{t\in [0, T]}$ is right continuous and complete.} be a filtered probability space on which every stochastic process appearing in this paper is defined. Given an $\mathbb R^d$-valued and square integrable L\'evy process $M=\begin{bmatrix}M_1 & \ldots & M_d\end{bmatrix}^\top$ with mean zero, we assume that it is $(\mathcal F_t)_{t\in [0, T]}$-adapted and its 
increments $M(t+h)-M(t)$ are independent of $\mathcal F_t$ for $t, h\geq 0$ and $t+h\leq T$. Exploiting the independent and stationary increments, there exists a positive semidefinite matrix $K=\left(k_{ij}\right)_{i, j=1, \ldots, d}$, so that $\mathbb E[M(t)M(t)^\top]=K t$, see \cite[Theorem 4.44]{zabczyk} for a proof. We call $K$ covariance matrix of $M$.
Now, we consider the following large-scale nonlinear stochastic dynamics driven by $M$:
 \begin{subequations}\label{original_system}
\begin{align}\label{stochstatenew}
             dx(t)&=[Ax(t)+Bu(t)+ f\left(x(t)\right)]dt+N\left(x(t-)\right)dM(t),\quad x(0)=x_0,\\ \label{output_eq}
            y(t) &= Cx(t),\quad t\in [0, T],
\end{align}
\end{subequations}
where $x(t-):=\lim_{s\uparrow t} x(s)$, $A\in \mathbb R^{n\times n}$, $B\in \mathbb R^{n\times m}$, $C\in \mathbb R^{p\times n}$, $N: \mathbb R^n\rightarrow \mathbb R^{n\times 
d}$ is a linear mapping
defined by $N(x)=\begin{bmatrix}N_1 x &\ldots & N_d x\end{bmatrix}$ for $x\in\mathbb R^n$ with $N_1, \ldots, N_d\in\mathbb R^{n\times n}$. The state vector $x(t)\in\mathbb R^n$ is assumed to be high-dimensional, whereas the quantity of interest $y(t)\in\mathbb R^p$ usually is a vector with a low number of entries.  The nonlinear function $f:\mathbb R^n\rightarrow \mathbb R^{n}$ shall satisfy the following local Lipschitz condition
\begin{align}\label{loc_lip__cond}
  \left\|f(x)-f(z) \right\|_2\leq c_R  \left\|x-z\right\|_2,                                                                                                                                                                                                                                                                                                                                                                                                                                                                                                                                                                                                                                                                                                                          \end{align}
for $\left\|x\right\|_2, \left\|z\right\|_2\leq R$, $c_R>0$  and any $R>0$, where $\langle \cdot, \cdot \rangle_2$ denotes the Euclidean inner product with corresponding norm $\left\|\cdot\right\|_2$. Further, we assume the special type of monotonicity condition\begin{align}\label{monotonicity_cond}
  \langle x, f(x) \rangle_2 \leq c_f   \left\|x\right\|_2^2,                                                                                                                                                                                                                                                                                                                                                                                                                                                                                                                                                                                                                                                                                                                          \end{align}
for all $x\in \mathbb R^n$ and a constant $c_f$. In the literature, \eqref{monotonicity_cond} is called one-sided growth condition as well. In fact, $c_f$ can be negative. In this case,  \eqref{monotonicity_cond} is also known as dissipativity condition. 
Below, $x(t, x_0, B)$, $t\in [0, T]$,
represents the solution to \eqref{stochstatenew} with initial condition $x_0\in\mathbb R^n$ and matrix $B$ determining the inhomogeneous part of the state equation. The associated control process $u$ is assumed to be an $(\mathcal F_t)_{t\in [0, T]}$-adapted process with 
     \begin{align*}
\left\|u\right\|_{L^2_T}^2:=\mathbb E \int_0^T \left\|u(t)\right\|_2^2 dt<\infty.
\end{align*}            
Moreover, suppose that $f(0) = 0$ to ensure that the uncontrolled state equation \eqref{stochstatenew} ($B= 0$) has an equilibrium at zero. If $f(0)\neq 0$, we can replace $f$ by $f-f(0)$ as well as $B$ and $u$ by $\begin{bmatrix}B & f(0) \end{bmatrix}$ and $\begin{bmatrix}u & 1 \end{bmatrix}^\top$, respectively. The above setting covers interesting polynomial nonlinearities. This fact is illustrated in the next example.
\begin{example}\label{example1}
The local Lipschitz condition \eqref{loc_lip__cond} is fulfilled by all functions $f$ with continuous partial derivatives. This is particularly given for polynomials. If we assume $f = f^{(i)}$, $i\in\{1, 2, 3\}$, to be special third order polynomial, where \begin{align*}
 f^{(1)}(x) &=  x\circ(\mathbf 1_n -x)\circ(x-\mathbf 1_n a)= (1+a) x^{\circ 2} - x^{\circ 3}-a x,\quad  a\in \mathbb R,\\
 f^{(2)}(x) &= x - x^{\circ 3} \quad \text{and} \quad f^{(3)}(x) = x - x \left\|x\right\|_2^2,
 \end{align*}
 the monotonicity condition \eqref{monotonicity_cond} holds.  
The products/powers involving ``$\circ$'' have to be understood in the Hadamard (component wise) sense and $\mathbf 1_n$ is the vector of ones having length $n$. Now,  \eqref{monotonicity_cond} can be verified by the following calculations \begin{align*}
\langle x,  f^{(1)}(x)\rangle_2 &=  -a\left\|x\right\|_2^2 +\sum_{i=1}^n x_i^2[(1+a)x_i-x_i^2] \leq  \frac{(a-1)^2}{4} \left\|x\right\|_2^2, \\                                                                                                                                                                                                       \langle x, f^{(2)}(x) \rangle_2 &= \left\|x\right\|_2^2 -\sum_{i=1}^n x_i^4 \leq \left\|x\right\|_2^2,\quad \langle x, f^{(3)}(x) \rangle_2 = \left\|x\right\|_2^2 - \left\|x\right\|_2^4 \leq \left\|x\right\|_2^2                                                                                                                                                                            \end{align*}
exploiting that $(1+a)x_i-x_i^2\leq \frac{(a+1)^2}{4}$ for all $x_i\in \mathbb R$.
\end{example}

Our setting is not restricted to the functions of Example \ref{example1}. However, we will frequently refer to these interesting cases.  Let us point out that the component-wise functions $f^{(1)}$ and $f^{(2)}$ occur if the nonlinear part of certain (stochastic) reaction diffusion equations are evaluated on a spatial grid. To be more precise, a finite difference discretization of Zeldovich-Frank-Kamenetsky (or FitzHugh-Nagano) and Chafee-Infante equations would lead to such a setting. This paper does not intend to discuss finite difference schemes for stochastic partial differential equations in detail. However, the interested reader may find more information regarding these methods in \cite{gyongy1, gyongy2,gyongy0, shardlow}. We also refer to, e.g., \cite{react_dif_daprato, kuehn_neamtu, marinelli_roeckner, zabczyk} for a theoretical treatment of stochastic reaction diffusion equations.\medskip

The goal of this paper is to drastically reduce the dimension of the high-dimensional system \eqref{original_system} in order to lower the computational complexity when solving this system of stochastic differential equations. Therefore, the solution manifold of \eqref{stochstatenew} shall be approximated by an $r$-dimensional subspace $\im[V]$ of $\R^n$  ($V\in\mathbb R^{n\times r}$ is a full-rank matrix), so that we find a process $x_r$ yielding $V x_r(t) \approx x(t)$. Inserting this estimate into \eqref{original_system} leads to \begin{align}\label{residual_eq}
 V x_r(t) = x_0 + \int_0^t AV x_r(s) + Bu(s) +f(V x_r(s)) ds + \int_0^t {N}\left(V x_r(s-)\right) dM(s) + e(t)
\end{align}
with $y(t)\approx y_r(t):= C V x_r(t)$ and where $e(t)$ is the  state equation error. Now, we enforce the residual $e(t)$  to be orthogonal to a second subspace $\im[W]$ ($W\in\mathbb R^{n\times r}$ has full rank). We further assume that our choice of $W$ provides $W^\top V = I$. Multiplying \eqref{residual_eq} with $W^\top$, we obtain
\begin{subequations}\label{red_system}
\begin{align}\label{red_stochstate}
 dx_r(t) &= [A_r x_r(t)+B_r u(t)+ f_r(x_r(t))] dt + N_{r}(x_r(t-)) dM(t), \\ \label{red_stochout}
 y_r(t) &= C_r x_r(t),\quad t\in [0, T],
\end{align}
\end{subequations}
with  $x_r(0) = W^\top{x}_{0}\in\R^r$, $r\ll n$ and $y\approx y_r$. Generally, we have that $x_r(t)\in\mathbb R^{r}$, $A_r\in \R^{r\times r}$, $B_r\in \mathbb R^{r\times m}$, $C_r\in \R^{p\times r}$, $N_r: \mathbb R^r\rightarrow \mathbb R^{r\times 
d}$ defined by $N_r(x_r)=\begin{bmatrix}N_{r, 1} x_r &\ldots & N_{r, d} x_r\end{bmatrix}$ for $x_r\in\mathbb R^r$, where ${N}_{r, i} \in \R^{r\times r}$ ($i=1,\ldots, d$) and $f_r:\R^r\rightarrow \R^r$. In particular, the reduced coefficients are of the following form 
 \begin{align}\label{red_mat_projection}
   A_r = W^\top A V, \quad    B_{r} = W^\top B,\quad f_r(\cdot) = W^\top f(V \cdot),\quad N_{r, i} = W^\top N_i V,\quad C_r = C V.
     \end{align}
The goal of this paper is to provide a reduced order method for which we can compute the projection matrices $V$ and $W$ and for which we find an accurate approximation of \eqref{original_system}. Here, the main focus will be on the control dynamics and not on the initial state. Therefore, we study reduced order modelling when $x_0=0$. Moreover, we aim to investigate Gramian based schemes which often heavily rely on stability of the state equation. Therefore, we  discuss global asymptotic stability in the next section. Before doing so, we briefly point out that there is a unique solution to \eqref{stochstatenew} by referring to the existing literature.

\section{Existence and uniqueness as well as global asymptotic stability} \label{sec3}

\subsection{Existence and uniqueness for \eqref{stochstatenew}}

We briefly discuss that our setting is well-posed. We define the drift function $F(t, x):= Ax+Bu(t)+ f(x)$ of \eqref{stochstatenew}. Using \eqref{monotonicity_cond} and exploiting that the remaining parts in the drift and diffusion are either linear in $x$ or solely time dependent,  we can find a constant $c_{F, N}$, so that 
\begin{align}\label{loc_lip__cond_F}
 2\langle x, F(t, x) \rangle_2 + \|N(x)K^{\frac{1}{2}}\|^2_F \leq c_{F, N}  \left(1+ \|x\|_2^2\right)
\end{align}
given that the control $u$ is bounded by a constant independent of $t\in[0, T]$ and $\omega\in \Omega$. Here, $\|\cdot\|_F$ denotes the Frobenius norm. Moreover, the drift $F$ is locally Lipschitz continuous (uniformly in $(t, \omega)$) in the sense of \eqref{loc_lip__cond}, since the same is true for $f$. As $N$ is linear, it is particularly globally Lipschitz with respect to $\|\cdot K^{\frac{1}{2}}\|_F$. The monotonicity condition \eqref{loc_lip__cond_F} and local Lipschitz continuity of drift and diffusion yield existence and uniqueness of a solution to \eqref{stochstatenew} by \cite[Theorem 3.5]{mao} if $M$ is a Brownian motion. On the other hand, the arguments of Mao \cite{mao} can immediately be transferred to our more general setting because the Ito-integral w.r.t $M$ has essentially the same properties as the one in the Brownian case. The first property is the Ito isometry $\mathbb E \left\|\int_0^T   \Psi(s) dM(s) \right\|_2^2 =\mathbb E \int_0^T   \|\Psi(s)K^{\frac{1}{2}} \|_F^2 ds=:\|\Psi \|^2$ for predictable\footnote{Predictable means that the process is measurable w.r.t. the $\sigma$ algebra that is generated by left-continuous and $(\mathcal F_t)_{t\in[0, T]}$-adapted processes.} processes $\Psi$ with $\|\Psi \|<\infty$ which relies on the linear covariance function of $M$, see \cite{zabczyk}. Secondly, the equation for the expected value of a quadratic form of the state variable has the same structure, see  Lemma \ref{lemstochdiff}. It is also worth mentioning that existence and uniqueness has been established in a more general setting than in \cite{mao} also covering ours, see \cite{exisence_uniqueness_levy}. There, the result was proved assuming a monotonicity condition, a local Lipschitz condition in the drift and the Brownian diffusion part as well as global Lipschitz continuity in the jump diffusion.

\subsection{A note on global asymptotic stability}
Stability concepts are essential in order to define computational accessible Gramians which are vital for identifying less relevant information in a system like \eqref{original_system}. We recall known facts for the linear part of \eqref{original_system}  based on the results in \cite{staboriginal}.
\begin{prop}\label{tmitkrostabe}
Let $f\equiv0$ and $B=0$ in \eqref{stochstatenew}, then
the following statements are equivalent: \begin{itemize}
\item[$(a)$]{{The state in \eqref{stochstatenew} is exponentially mean 
square stable, i.e., there are $k, \beta>0$, so that 
\begin{align*}
\sqrt{\mathbb E \left\|x(t, x_0, 0)\right\|^2_2}\leq \left\|x_0\right\|_2 k \expn^{-\beta t}.
\end{align*}}}
\item[$(b)$]{It holds that
\begin{align*}
\lambda\Big(I\otimes A+A\otimes 
I+\sum_{i, j=1}^d N_i\otimes N_j k_{ij}\Big)\subset \mathbb C_-,
\end{align*}
where $\lambda(\cdot)$ denotes the spectrum of a matrix.}
\item[$(c)$]{There exists a matrix $X>0$ with \begin{align*} 
A^\top X+X A+\sum_{i, j=1}^d N_i^\top X N_j k_{ij}<0.\end{align*}}
\end{itemize}
\end{prop}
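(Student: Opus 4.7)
The plan is to reduce everything to the deterministic second-moment matrix $P(t) := \mathbb E[x(t) x(t)^\top]$. Applying Lemma \ref{lemstochdiff} entry-by-entry (this remains valid in our general L\'evy setting because the Ito isometry for $M$ is governed by the covariance $K$) yields the linear matrix ODE
\begin{equation*}
\dot P(t) \;=\; A\,P(t) + P(t)\,A^\top + \sum_{i,j=1}^d k_{ij}\, N_i\, P(t)\, N_j^\top, \qquad P(0) = x_0 x_0^\top.
\end{equation*}
Vectorising via $\vect(UVW) = (W^\top\otimes U)\vect(V)$ and using $k_{ij}=k_{ji}$ rewrites this as $\frac{d}{dt}\vect(P(t)) = \mathcal L\, \vect(P(t))$ with $\mathcal L$ exactly the Kronecker matrix from (b). All three statements are thereby recast in terms of the single linear flow $\expn^{\mathcal L t}$ on $\R^{n^2}$.

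For the equivalences I would run (c)$\Rightarrow$(a)$\Rightarrow$(c) and then (c)$\Leftrightarrow$(b). The direction (c)$\Rightarrow$(a) uses $V(x) = x^\top X x$ as a stochastic Lyapunov function: Lemma \ref{lemstochdiff} yields $\frac{d}{dt}\mathbb E[V(x(t))] = \mathbb E\bigl[x(t)^\top \mathcal T(X) x(t)\bigr]$ where $\mathcal T(X) := A^\top X + XA + \sum_{i,j} k_{ij} N_i^\top X N_j$ is the matrix in (c). Since $\mathcal T(X) < 0$ and $X > 0$ there is $\varepsilon > 0$ with $\mathcal T(X) \le -\varepsilon X$, so Gronwall gives $\mathbb E[V(x(t))] \le \expn^{-\varepsilon t} V(x_0)$; sandwiching $V$ between $\lambda_{\min}(X)\|x\|_2^2$ and $\lambda_{\max}(X)\|x\|_2^2$ produces (a) with explicit $k,\beta$. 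For (a)$\Rightarrow$(c) I would define $X$ through the quadratic form $\langle X x_0, x_0 \rangle := \int_0^\infty \mathbb E\|x(t, x_0, 0)\|_2^2\,dt$; (a) makes this integral finite, and strict positivity on $x_0 \ne 0$ follows because the integrand is positive near $t = 0$. Differentiating $t \mapsto \mathbb E[\langle X x(t), x(t)\rangle]$, then integrating from $0$ to $\infty$ (and using $\mathbb E\|x(t)\|_2^2 \to 0$), identifies $\mathcal T(X) = -I_n$, proving (c).

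Finally, (b)$\Leftrightarrow$(c) is the classical Lyapunov--matrix correspondence. The Kronecker representative of the operator $Y \mapsto \mathcal T(Y)$ is $\mathcal L^\top$ (because $\mathcal T$ is the adjoint of $P\mapsto AP + PA^\top + \sum k_{ij} N_i P N_j^\top$ in the trace inner product), so $\mathcal L$ and $\mathcal T$ share their spectrum. Under (b), $X := \int_0^\infty \expn^{\mathcal T t}(I_n)\,dt$ is therefore well-defined and positive definite and satisfies $\mathcal T(X) = -I_n < 0$, giving (c); the reverse is the standard Lyapunov argument on the deterministic ODE $\dot Y = \mathcal T(Y)$. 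The step I expect to demand the most care is reconciling the symmetric/antisymmetric invariant decomposition of $\mathcal L$ with the \emph{full}-spectrum statement in (b), since the moment dynamics only give Hurwitz-ness on the symmetric invariant subspace. I would handle the antisymmetric piece either by quoting \cite{staboriginal} or by noting that in the noise-free part the antisymmetric spectrum $\{\lambda_i(A)+\lambda_j(A): i<j\}$ is already contained in the symmetric one $\{\lambda_i(A)+\lambda_j(A): i\le j\}$, and extending this inclusion through the Kronecker structure of the noise contribution.
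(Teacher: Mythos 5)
The paper does not actually prove this proposition; it defers entirely to \cite{damm, redmannspa2}. Your sketch reconstructs the standard argument from those references, and most of it is sound: the second-moment ODE $\dot P = AP+PA^\top+\sum k_{ij}N_iPN_j^\top$ and its vectorisation, the Lyapunov-function argument for (c)$\Rightarrow$(a), and the observability-Gramian-type construction $\langle Xx_0,x_0\rangle=\int_0^\infty\mathbb E\|x(t,x_0,0)\|_2^2\,dt$ for (a)$\Rightarrow$(c) (the last step there is cleanest if you use the Markov/flow property to write $\mathbb E[x(s)^\top Xx(s)]=\int_s^\infty\mathbb E\|x(t)\|_2^2\,dt$ and differentiate at $s=0$, rather than integrating the derivative over $[0,\infty)$, which only gives $\int_0^\infty \mathbb E[x^\top(\mathcal T(X)+I)x]\,dt=0$ and needs an extra step to conclude $\mathcal T(X)=-I$).

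The genuine gap is exactly the point you flag at the end: passing from Hurwitz-ness of $\mathcal L$ on the symmetric invariant subspace (which is all that the moment dynamics, or a solution of (c), directly control) to the full-spectrum statement in (b). Your proposed repair --- that the antisymmetric spectrum is contained in the symmetric one and that this inclusion "extends through the Kronecker structure of the noise contribution" --- is false in general. Already for $n=2$, $d=1$, $A=\operatorname{diag}(1,-1)$ and $N_1=\left[\begin{smallmatrix}0&2\\-1&0\end{smallmatrix}\right]$ one computes $\sigma(\mathcal L|_{\mathrm{antisym}})=\{\operatorname{tr}A+\det N_1\}=\{2\}$ while $\sigma(\mathcal L|_{\mathrm{sym}})=\{\pm2\sqrt2,-2\}$, so no containment holds once the $N_i\otimes N_j$ terms are present. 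What is true --- and what the equivalence actually rests on --- is that $\mathcal L$ (equivalently its adjoint $\mathcal T$) is a resolvent positive operator, so by the Perron--Frobenius/Krein--Rutman theory its spectral abscissa is attained at a positive semidefinite, hence symmetric, eigenvector; therefore the spectral abscissa of the full operator coincides with that of its symmetric restriction. That positivity argument is the content of \cite{damm} and cannot be replaced by a spectral-inclusion count; citing \cite{staboriginal} for this specific matrix-spectral fact is also off target, since the relevant statement lives in \cite{damm, redmannspa2}. With that one step repaired (or properly cited), your proof is complete.
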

\begin{proof}
A proof of these statements can be found in \cite{damm, redmannspa2}. 
\end{proof}
Throughout the rest of the paper, we assume that 
\begin{align}\label{shifted_stab}
\lambda\Big(I\otimes (A+c_1I)+(A+c_1 I)\otimes 
I+\sum_{i, j=1}^d N_i\otimes N_j k_{ij}\Big)\subset \mathbb C_-
\end{align}
for some constant $c_1$. According to Proposition \ref{tmitkrostabe} this means that \eqref{stochstatenew} with the shifted linear drift coefficient $A+c_1 I$ is mean square asymptotically stable for $B=0$ and $f\equiv 0$. The associated state variable is of the form $\expn^{c_1 t} x(t)$, so that the original state $x(t)$ ($B=0$ and $f\equiv 0$) needs to have a decay rate $\beta >c_1$, see Proposition \ref{tmitkrostabe} (a), given that $c_1$ is positive. We desire, but do not assume, that we can choose $c_1\geq c_f$, i.e., the decay rate of the linear part shall outperform the one-sided linear growth constant in \eqref{monotonicity_cond}. This requires a sufficiently stable linear part if $c_f>0$, e.g.,  for the nonlinearities in Example \ref{example1}. Since $c_f$ can also be negative, this means that the linear part of \eqref{stochstatenew} can even be exponentially increasing in some cases. 
Using classical arguments of \cite{staboriginal, mao} based on quadratic Lyapunov-type functions, we provide the following criterion for the global mean square stability of the uncontrolled state equation \eqref{stochstatenew}. This criterion is required around the discussion of the Gramians introduced later.
\begin{thm}\label{thm_global_stab}
Suppose that $B=0$ in \eqref{stochstatenew} and given constants $c_1, c_2\in \mathbb R$ and a matrix $X>0$. If we have 
\begin{align}\label{shifted_Lyap}
(A+c_1 I)^\top X+ X (A+c_1 I)+&\sum_{i, j=1}^d N_i^\top X N_j k_{ij}<0\quad \text{and}\\ \label{Xmonoton}
\langle x, X f(x)\rangle_2 &\leq c_2  \left\|X^{\frac{1}{2}}x\right\|_2^2                                                                                                                                                                                                                                                                                                                                                                                                                                                                                                                                                                                                                                                                                                                    \end{align}
for all $x\in \mathbb R^n$. Then, there exist constants $k, \beta>0$, so that 
\begin{align*}
\mathbb E \left\|x(t, x_0, 0)\right\|^2_2\leq \left\|x_0\right\|^2_2 k \expn^{(2(c_2-c_1)-\beta) t}.
\end{align*} 
\end{thm}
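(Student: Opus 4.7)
The plan is to prove the estimate by applying an Ito/Dynkin-type formula to the quadratic Lyapunov function $V(x)=x^\top X x$ evaluated along the solution $x(t)=x(t,x_0,0)$. As hinted in the paragraph preceding the theorem (where Lemma \ref{lemstochdiff} is referenced), the L\'evy noise gives the same structural formula for $\mathbb E[V(x(t))]$ as in the Brownian case, with the quadratic variation contribution entering through the covariance matrix $K$. Specifically, I would write
\begin{align*}
\frac{d}{dt}\mathbb E\bigl[x(t)^\top X x(t)\bigr]
=\mathbb E\!\left[2\langle x(t),X(Ax(t)+f(x(t)))\rangle_2+\sum_{i,j=1}^{d} k_{ij}\,x(t)^\top N_i^\top X N_j\,x(t)\right].
\end{align*}

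Next I would split the linear and diffusion contribution by inserting the shift $c_1I$:
\begin{align*}
2x^\top XAx+\sum_{i,j=1}^{d}k_{ij}x^\top N_i^\top X N_j x
&=x^\top\!\Bigl[(A+c_1I)^\top X+X(A+c_1I)+\sum_{i,j=1}^{d}k_{ij}N_i^\top X N_j\Bigr]x\\
&\quad-2c_1\,x^\top X x.
\end{align*}
By the strict inequality \eqref{shifted_Lyap}, the bracketed matrix is negative definite. Since $X>0$, there exists $\beta>0$ with $\text{(bracket)}\leq -\beta X$ (one may take $\beta$ as the smallest eigenvalue of $-X^{-1/2}(\cdot)X^{-1/2}$). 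Combined with assumption \eqref{Xmonoton}, which yields $2\langle x,Xf(x)\rangle_2\leq 2c_2\,x^\top X x$, this gives the differential inequality
\begin{align*}
\frac{d}{dt}\mathbb E\bigl[x(t)^\top X x(t)\bigr]\leq \bigl(2(c_2-c_1)-\beta\bigr)\,\mathbb E\bigl[x(t)^\top X x(t)\bigr].
\end{align*}
Gronwall's lemma then produces $\mathbb E[x(t)^\top X x(t)]\leq x_0^\top X x_0\,\expn^{(2(c_2-c_1)-\beta)t}$, and sandwiching by the spectral bounds $\lambda_{\min}(X)\|x\|_2^2\leq x^\top X x\leq \lambda_{\max}(X)\|x\|_2^2$ yields the stated bound with $k=\lambda_{\max}(X)/\lambda_{\min}(X)$.

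The only delicate step is justifying the differential identity for $\mathbb E[V(x(t))]$ rigorously: the integrand under the expectation a priori need not be integrable before stability has been established. The standard remedy is a localization argument using the stopping times $\tau_R:=\inf\{t\geq 0:\|x(t)\|_2\geq R\}$ together with the existence/uniqueness result from Section \ref{sec3}; applying Ito's formula up to $t\wedge\tau_R$, taking expectations (the stochastic-integral contribution vanishes by the Ito isometry recalled in the preceding discussion), deriving the same differential inequality for the localized expectation, and finally passing to $R\to\infty$ by monotone/Fatou arguments. This is the main technical obstacle; once dispatched, the rest of the argument is the routine Lyapunov–Gronwall chain sketched above.
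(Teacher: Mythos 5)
Your proposal is correct and follows essentially the same route as the paper's proof in Appendix \ref{appendixB}: apply Lemma \ref{lemstochdiff} to $X^{\frac{1}{2}}x(t)$, insert the shift $c_1 I$, absorb the nonlinearity via \eqref{Xmonoton}, extract $\beta>0$ from the strict negativity in \eqref{shifted_Lyap}, and finish with the differential Gronwall lemma and the spectral sandwich $\underline k I\leq X\leq \overline k I$ giving $k=\overline k/\underline k$. Your closing remark on localization addresses a technicality the paper delegates to the cited proof of Lemma \ref{lemstochdiff} rather than re-deriving.
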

\begin{proof}
A proof is stated in Appendix \ref{appendixB}.
\end{proof}
\begin{remark}
If $c_1\geq c_2$ in Theorem \ref{thm_global_stab}, we obtain global mean square asymptotic stability for our nonlinear system. In particular, by assumption \eqref{monotonicity_cond}, \eqref{Xmonoton} holds for $X=I$ and $c_2=c_f$.  If \eqref{shifted_Lyap}
is now true for $X=I$ and $c_1=c_f$, mean square asymptotic stability follows.
\end{remark}

\section{Gramians and dominant subspace characterization}
\label{sec:reach}

In this section, algebraic objects, called Gramians, are introduced. We aim to construct them, so that their eigenspaces corresponding to small eigenvalues coincide with the information in \eqref{original_system} that can be neglected. It is not trivial to find the right notion for general nonlinearities $f$. However, the monotonicity condition in \eqref{monotonicity_cond} will become essential for our concept. In particular, positive (semi)definite Gramian candidates $X$ have to preserve \eqref{monotonicity_cond} in a certain sense when $\langle \cdot,  \cdot \rangle_2$  is replaced by $\langle \cdot, X \cdot \rangle_2$ or $\langle \cdot, X^{-1} \cdot \rangle_2$. 
We begin with a global Gramian concept to illustrate what we require. Subsequently, we immediately weaken it for practical reasons.  
\subsection{Monotonicity Gramians}
First, a pair of Gramians is defined that characterizes dominant subspaces of \eqref{original_system}  for all $u\in L^2_T$.
\begin{defn}\label{global_mon_gram}
Let $c_1$ and $c_2$ be constants. Then, a pair of matrices $(P, Q)$ with $P, Q>0$ is called global monotonicity Gramians if they satisfy
\begin{align}\label{newgram2}
 &(A+c_1 I)^\top P^{-1}+P^{-1}(A+c_1 I)+\sum_{i, j=1}^d N_i^\top P^{-1} N_j k_{ij} \leq -P^{-1}BB^\top P^{-1},\\ \label{newgram2Q}
 & (A+c_1 I)^\top Q+Q (A+c_1 I)+\sum_{i, j=1}^d N_i^\top Q N_j k_{ij} \leq -C^\top C,
                                       \end{align}
and if further holds that \begin{align}\label{PQ_monoton}
  \langle x,  P^{-1} f(x) \rangle_2 \leq c_2   \|P^{-\frac{1}{2}}x\|_2^2\quad \text{and}\quad    \langle x, Q f(x) \rangle_2 \leq c_2   \|Q^{\frac{1}{2}}x\|_2^2                                                                                                                                                                                                                                                                                                                                                                                                                                                                                                                                                                                                                                                                                                                       
                          \end{align}
for all $x\in\mathbb R^n$.
                                       \end{defn}
Notice that assumption \eqref{shifted_stab} ensures the existence of solutions to \eqref{newgram2} and \eqref{newgram2Q}, see \cite{bennerdammcruz, redmannspa2}. In the following, we state a sufficient criterion for the existence of Gramians also satisfying \eqref{PQ_monoton}.
\begin{prop}\label{propzugram2}
Suppose that \eqref{shifted_Lyap} and
\eqref{Xmonoton} hold with some constants $c_1$ and $c_2$. Then, global monotonicity Gramians $P$ and $Q$ exist with the same constants.
 \begin{proof}
We denote the left hand side of \eqref{shifted_Lyap} by $-Y$ and multiply it with $\gamma>0$. Hence, we have\begin{align} 
\label{eqtogetgram2}
(A+c_1 I)^\top (\gamma X)+(\gamma X) (A+c_1 I)+\sum_{i, j=1}^d N_i^\top (\gamma X) N_j k_{ij}=-\gamma Y.\end{align}  
Since $Y>0$, we can ensure that $-\gamma Y \leq -(\gamma X)BB^\top (\gamma X)$ if $\gamma$ is sufficiently small. Therefore, $P=(\gamma 
X)^{-1}$ solves \eqref{newgram2} for a potentially small $\gamma$. On the other hand, this $P$ gives us $\langle x,  P^{-1} f(x) \rangle_2= \gamma\langle x,  X f(x) \rangle_2\leq \gamma c_2   \|X^{\frac{1}{2}}x\|_2^2= c_2   \|P^{-\frac{1}{2}}x\|_2^2$. Now, we know that $-\gamma Y \leq -C^\top C$ if $\gamma$ is sufficiently large. Consequently, $Q=\gamma 
X$ satisfies \eqref{newgram2Q} for a potentially large $\gamma$. Moreover, we find that $ \langle x, Q f(x) \rangle_2= \gamma \langle x, X f(x) \rangle_2 \leq \gamma  c_2   \|X^{\frac{1}{2}}x\|_2^2= c_2   \|Q^{\frac{1}{2}}x\|_2^2$ using \eqref{Xmonoton}. This concludes the proof.
 \end{proof}
\end{prop}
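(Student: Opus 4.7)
The plan is a one-parameter scaling argument: I start from a single matrix $X>0$ provided by hypotheses \eqref{shifted_Lyap} and \eqref{Xmonoton}, and construct $Q$ and $P$ as two different scalar multiples of $X$ (respectively $X^{-1}$). The underlying reason this works is that the shifted Lyapunov operator $X \mapsto (A+c_1 I)^\top X + X(A+c_1 I) + \sum_{i,j} N_i^\top X N_j k_{ij}$ is linear in $X$, and the monotonicity inequality \eqref{Xmonoton} is homogeneous of degree one in $X$, so a rescaling preserves the constant $c_2$ on both sides automatically.

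Concretely, I would let $-Y$ denote the left-hand side of \eqref{shifted_Lyap}, so that $Y>0$ by assumption. Multiplying the identity by $\gamma>0$ yields
\begin{equation*}
(A+c_1 I)^\top (\gamma X) + (\gamma X)(A+c_1 I) + \sum_{i,j=1}^d N_i^\top (\gamma X) N_j k_{ij} = -\gamma Y,
\end{equation*}
so both candidates inherit a Lyapunov-type equality with a freely scalable negative-definite right-hand side. The choice of $\gamma$ is then tuned separately for the two inequalities. To obtain $Q$, I set $Q = \gamma X$ and need $-\gamma Y \le -C^\top C$, which is guaranteed once $\gamma$ is large enough (for instance any $\gamma \ge \lambda_{\max}(C^\top C)/\lambda_{\min}(Y)$). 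To obtain $P$, I set $P = (\gamma X)^{-1}$, so $P^{-1} = \gamma X$; the required bound $-\gamma Y \le -\gamma^2 XBB^\top X$ is equivalent to $Y \ge \gamma\, XBB^\top X$, which holds once $\gamma$ is small enough since $XBB^\top X$ is positive semidefinite and $Y$ is positive definite.

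It remains to verify \eqref{PQ_monoton}, and this is where the linearity of \eqref{Xmonoton} pays off: for $P^{-1}=\gamma X$ one has $\langle x, P^{-1}f(x)\rangle_2 = \gamma \langle x, Xf(x)\rangle_2 \le \gamma c_2 \|X^{1/2}x\|_2^2 = c_2 \|P^{-1/2}x\|_2^2$, and the argument for $Q=\gamma X$ is identical. Thus both Gramian conditions are satisfied with the same constants $c_1$ and $c_2$ as in the hypothesis.

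I do not expect any serious obstacle; the only conceptual point worth highlighting is that the Lyapunov left-hand side scales linearly in $\gamma$, whereas the two target right-hand sides scale differently, namely $-C^\top C$ is independent of $\gamma$ while $-P^{-1}BB^\top P^{-1}$ is quadratic in $\gamma$. This mismatch is precisely why one cannot in general expect $P^{-1}$ and $Q$ to coincide, and why the proof must choose a large $\gamma$ for $Q$ and a small $\gamma$ for $P^{-1}$.
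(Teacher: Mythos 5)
Your proposal is correct and follows essentially the same route as the paper: scale the Lyapunov identity for $X$ by $\gamma$, take $\gamma$ small to get $P=(\gamma X)^{-1}$ satisfying \eqref{newgram2} and $\gamma$ large to get $Q=\gamma X$ satisfying \eqref{newgram2Q}, with \eqref{PQ_monoton} preserved by the degree-one homogeneity of \eqref{Xmonoton}. Your explicit threshold for $\gamma$ and the remark about the differing scaling of the two right-hand sides are nice clarifications but do not change the argument.
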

\begin{remark}
Certainly, the existence of global monotonicity Gramians is not sufficient for our considerations. As we will see later, it is important to find candidates $P$ and $Q$ that have a large number of small eigenvalues. Consequently, one might have to solve a problem of minimizing $\trace(P)$ and $\trace(Q)$ subject to \eqref{newgram2}, \eqref{newgram2Q} and \eqref{PQ_monoton}. Moreover, we allow $c_1<c_2$  in Definition \ref{global_mon_gram} to have an additional degree of freedom. However, this comes with a price. We will observe that $c_2-c_1$ is supposed to be small. In fact, we desire to choose $c_1=c_2$ if such a $c_1$ ensures \eqref{shifted_stab}.  
\end{remark}
\begin{example}\label{example2}
\begin{itemize}
 \item 
Choosing $f=f^{(3)}$ from Example \ref{example1}, we see that $\langle x, X f^{(3)}(x)\rangle_2 \leq \|X^{\frac{1}{2}}x\|_2^2$ for any $X\geq 0$ and all $x\in\mathbb R^n$. Therefore, any solutions of \eqref{newgram2} and \eqref{newgram2Q} with $c_1=c_2=c_f=1$ are global monotonicity Gramians. In particular, we can choose the solution to the equality in \eqref{newgram2Q} and the candidate with minimal trace in \eqref{newgram2}.
\item If $f$ is globally Lipschitz in some norm, then there exist a Lipschitz constant $c_L$, so that  $\langle x, X f(x)\rangle_2 = \langle  X^{\frac{1}{2}} x, X^{\frac{1}{2}} f(x)\rangle_2  \leq \|X^{\frac{1}{2}}x\|_2 \|X^{\frac{1}{2}}f(x)\|_2\leq c_L \|X^{\frac{1}{2}}x\|_2^2$ given that $X=P^{-1}, Q>0$ meaning that every positive solution to  \eqref{newgram2} and \eqref{newgram2Q} can be picked. However, $c_L$ depends on $X$ which shows that $c_1$ and $c_2$ influence each other. On the other hand, this $c_L$ might not be the optimal candidate for the one-sided Lipschitz constant $c_2$ which can even be negative, i.e., it is also challenging to identify optimal constants.
\end{itemize}
\end{example}
We emphasize further that, generally, we cannot derive $P$ and $Q$ independent of \eqref{PQ_monoton}. For instance, fixing  $c_1=c_2\geq c_f$, 
we can easily find a solution $Q$ for \eqref{newgram2Q} and a vector $x\in\mathbb R^n$, so that 
$\langle x, Q f^{(1)}(x)\rangle_2 > c_2\|Q^{\frac{1}{2}}x\|_2^2$. Here, $f=f^{(1)}$ is the function defined in Example \ref{example1}. Having in mind that we aim to fix $c_1$ and $c_2$ close to each other with associated Gramians $P$ and $Q$ having a large number of small eigenvalues, the concept of global Gramians might generally be too restrictive. Therefore, it is more reasonable to seek for solutions of \eqref{newgram2} and \eqref{newgram2Q} that satisfy \eqref{PQ_monoton} on average instead of point-wise. This means, we aim to allow for  positive values of the monotonicity gaps \begin{align}\label{loc_max_functions}
G_{P^{-1}}(x) :=  \langle x, P^{-1} (f(x)-c_2 x)\rangle_2   \quad\text{and} \quad G_Q(x) :=  \langle x, Q (f(x)-c_2 x)\rangle_2                                                                                                                                                                                                                                                                                                                                                                                                                                                                                                                                                                                                                                                                                        \end{align}
as long as $G_{P^{-1}}$ and $G_Q$ are mainly non-positive on the essential parts of $\mathbb R^n$. We specify the above arguments in the following definition. In this context, we introduce the set $\mathcal U$ of controls $u\in L^2_T$ for which we desire to evaluate system \eqref{original_system}.  The following pair of Gramians $(P, Q)$ identifies less important direction for controls in $\mathcal U$. Therefore, it is meaningful to pick Gramian candidates that ensure a large set $\mathcal U$. 
\begin{defn}\label{def_av_mon_Gram}
Let $c_1, c_2$ be constants and $\mathcal U\subseteq L^2_T$ be the set of controls we are interested in. Then, a pair of matrices $(P, Q)$ with $P, Q>0$ is called average monotonicity Gramians for $\mathcal U$ if 
\eqref{newgram2} and \eqref{newgram2Q} are satisfied, respectively,
 and if instead of \eqref{PQ_monoton} it holds that  \begin{align}\label{P_average_monoton}
 \mathbb E\int_0^t \langle x(s),  P^{-1} f(x(s)) \rangle_2 ds &\leq c_2 \, \mathbb E\int_0^t  \|P^{-\frac{1}{2}}x(s)\|_2^2 ds\quad \text{and}\\ \label{Q_average_monoton}
 \mathbb E\int_0^t \langle x(s), Q f(x(s)) \rangle_2 ds &\leq c_2 \,\mathbb E\int_0^t \|Q^{\frac{1}{2}}x(s)\|_2^2 ds                                                                                                                                                                                                                                                                                                                                                                                                                                                                                                                                                                                                                                                                                                                      
                          \end{align}
for all $t\in [0, T]$ and all state variables $x(t)=x(t, 0, u)$ with $u\in \mathcal U$.
                                       \end{defn}
Certainly, a global is also an average monotonicity Gramian with $\mathcal U= L^2_T$. Suppose that there are areas, where one of the functions in \eqref{loc_max_functions} is positive. Then, controls $u$ concentrating the state variable $x$ in such areas for a long time will violate \eqref{P_average_monoton} or \eqref{Q_average_monoton}.

\begin{remark}\label{rem_gen_gram}
In Definitions \ref{global_mon_gram} and \ref{def_av_mon_Gram}, Gramians are constructed as solutions to (shifted) linear matrix inequalities in order to allow a practical computation. This is possible due to the monotonicity condition for $f$ in \eqref{monotonicity_cond} which shall be preserved in some sense under the inner products defined by the Gramians $P$ and $Q$. A more general version of global monotonicity Gramians is obtained by adding twice the estimates in \eqref{PQ_monoton} to \eqref{newgram2} and \eqref{newgram2Q} resulting in
\begin{align}\label{generalized_mon_Gram_P}
 &\hspace{-0.23cm} x^\top \Big( A^\top P^{-1}+P^{-1}A+\hspace{-0.05cm}\sum_{i, j=1}^d N_i^\top P^{-1} N_j k_{ij} \Big)x + 2 \langle x,  P^{-1} f(x) \rangle_2\leq -\| B^\top P^{-1} x\|_2^2 +c\|P^{-\frac{1}{2}}x\|_2^2,\\ \label{generalized_mon_Gram_Q}
  &\hspace{-0.23cm} x^\top \Big(A^\top Q+QA+\hspace{-0.05cm}\sum_{i, j=1}^d N_i^\top Q N_j k_{ij} \Big)x  + 2 \langle x, Q f(x) \rangle_2 \leq -\| Cx\|_2^2+c\|Q^{\frac{1}{2}}x\|_2^2
             \end{align}
for all $x\in\mathbb R^n$, where $c\geq 0$ is some ``small'' constant. The same way, average monotonicity Gramians can be generalized setting $x=x(s)$ in \eqref{generalized_mon_Gram_P} and \eqref{generalized_mon_Gram_Q}, taking the expected value and integrating both sides of these inequalities over each subinterval $[0, t]$ with $0<t\leq T$. However, we will not discuss this generalization in detail below.
\end{remark}

\subsection{Relevance of monotonicity Gramians}

In the following, we state in which sense the Gramians of Definition \ref{def_av_mon_Gram} help to identify the dominant subspaces of \eqref{original_system}. This then motivates a truncation procedure resulting in a special type of reduced system \eqref{red_system}.
Below, let us assume that $x_0=0$, i.e., $x(t) = x(t, 0, u)$. By definition, Gramians are positive (semi)definite matrices. Consequently, we can find an orthonormal basis  $(p_k)$ for $\mathbb R^n$ consisting of eigenvalues of $P$ with corresponding eigenvalues $(\lambda_{P, k})$. The same is true for $Q$, where the basis is denoted by $(q_k)$ with associated eigenvalues $(\lambda_{Q, k})$. Hence, the state variable can be represented as \begin{align}\label{eigen_rep}
x(t)= \sum_{k=1}^n \langle x(t), p_{k}  
\rangle_2 \,p_k\quad \text{and}\quad  x(t)= \sum_{k=1}^n \langle x(t), q_{k}  
\rangle_2 \,q_k.                                                                                                                                                                                                                                                                                                                                                                                                                                                                                                                                                                                                                                                                                                                                                                                                                                                                                                                                                                                                                                                          \end{align}
Based on this representation, we aim to answer which directions $p_k$ are less relevant in \eqref{stochstatenew} and which directions $q_k$ can be neglected in \eqref{output_eq}.
\begin{thm}\label{energy_est}
Let $P$ and $Q$ be average monotonicity Gramians for the set of controls $\mathcal U\subseteq L^2_T$ and constants $c_1, c_2$ according to Definition \ref{def_av_mon_Gram}. Moreover, let $(p_k, \lambda_{P, k})$ and $(q_k, \lambda_{Q, k})$ be associated bases of eigenvectors giving us \eqref{eigen_rep}. Then, given a zero initial state, we have
\begin{align}\label{P_est}
\sup_{t\in[0, T]}\mathbb E \langle x(t), p_{k}  \rangle_2^2 &\leq \lambda_{P, k} \expn^{c T} \left\|u\right\|_{L^2_T}^2,\\ \nonumber
\mathbb E \int_0^t\left\|y(s)\right\|_2^2ds&\leq 2\mathbb E\int_0^t \langle Q x(s), B u(s) \rangle_2 \expn^{c (t-s)} ds \\ \label{Q_est}
&= 2\sum_{k=1}^n \lambda_{Q, k} \mathbb E\int_0^t \langle q_k, x(s) \rangle_2 \langle q_k, B u(s) \rangle_2 \expn^{c (t-s)} ds
\end{align}
for all $t\in [0, T]$ and $u\in\mathcal U$, where $c=\max\{0, 2(c_2-c_1)\}$.
\end{thm}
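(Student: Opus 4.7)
The plan is to apply an Itô/Dynkin-type formula to the quadratic forms $V_P(x):=x^\top P^{-1}x$ and $V_Q(x):=x^\top Qx$ and then combine the resulting energy balances with the Lyapunov inequalities \eqref{newgram2}--\eqref{newgram2Q} and the average monotonicity assumptions \eqref{P_average_monoton}--\eqref{Q_average_monoton}. Since $x(0)=0$, both Lyapunov functions vanish initially, which keeps the bookkeeping clean. A Gronwall-type argument then closes each of the two estimates.

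For \eqref{P_est}, applying the generator to $V_P$, using \eqref{newgram2} in unshifted form $A^\top P^{-1}+P^{-1}A+\sum k_{ij}N_i^\top P^{-1}N_j\le-P^{-1}BB^\top P^{-1}-2c_1P^{-1}$, completing the square via $-\|B^\top P^{-1}x\|_2^2+2\langle B^\top P^{-1}x,u\rangle_2\le\|u\|_2^2$, integrating from $0$ to $t$ and invoking \eqref{P_average_monoton} yields
\begin{equation*}
\mathbb E[x(t)^\top P^{-1}x(t)]\le\|u\|_{L^2_t}^2+2(c_2-c_1)\int_0^t\mathbb E[x(s)^\top P^{-1}x(s)]\,ds.
\end{equation*}
Because $t\mapsto\|u\|_{L^2_t}^2$ is non-decreasing, Gronwall's lemma gives $\mathbb E[x(t)^\top P^{-1}x(t)]\le\|u\|_{L^2_T}^2\expn^{cT}$ with $c=\max\{0,2(c_2-c_1)\}$ (the exponential being trivial when $c_2\le c_1$). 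The spectral decomposition $P^{-1}=\sum_j\lambda_{P,j}^{-1}p_jp_j^\top$ then gives $\lambda_{P,k}^{-1}\langle x,p_k\rangle_2^2\le x^\top P^{-1}x$, and taking $\sup_{t\in[0,T]}$ produces \eqref{P_est}.

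The same Itô procedure applied to $V_Q$, now using \eqref{newgram2Q} and \eqref{Q_average_monoton}, yields the integral inequality
\begin{equation*}
\eta(t)+\beta(t)\le 2(c_2-c_1)\int_0^t\eta(s)\,ds+\gamma(t),
\end{equation*}
where I set $\eta(t):=\mathbb E[x(t)^\top Qx(t)]$, $\beta(t):=\mathbb E\int_0^t\|Cx(s)\|_2^2\,ds$ and $\gamma(t):=2\mathbb E\int_0^t\langle Qx(s),Bu(s)\rangle_2\,ds$. If $c_2\le c_1$ the middle summand is non-positive, so $\beta(t)\le\gamma(t)$, which matches the claim for $c=0$. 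If $c_2>c_1$ I set $c=2(c_2-c_1)$, drop $\eta(t)\ge 0$ on the left to obtain $\eta(t)\le c\int_0^t\eta(s)\,ds+\gamma(t)$, and apply Gronwall to conclude $\eta(s)\le 2\mathbb E\int_0^s\expn^{c(s-r)}\langle Qx(r),Bu(r)\rangle_2\,dr$. Re-inserting this into $\beta(t)\le c\int_0^t\eta(s)\,ds+\gamma(t)$ and swapping the order of integration (Fubini) converts $c\int_0^t\eta(s)\,ds$ into $2\mathbb E\int_0^t(\expn^{c(t-r)}-1)\langle Qx(r),Bu(r)\rangle_2\,dr$; adding $\gamma(t)$ produces the desired bound $2\mathbb E\int_0^t\expn^{c(t-r)}\langle Qx(s),Bu(s)\rangle_2\,ds$. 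The second line of \eqref{Q_est} is immediate from $Qx=\sum_k\lambda_{Q,k}\langle x,q_k\rangle_2 q_k$.

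The main technical hurdle is the observability estimate in the regime $c_2>c_1$: since the average monotonicity \eqref{Q_average_monoton} only controls a time-integrated inner product, one cannot simply insert a Gronwall weight $\expn^{-cs}$ under the expectation. The two-step procedure above (Gronwall on $\eta$ first, then Fubini to transfer the exponential kernel onto the control term) circumvents this while producing exactly the $\expn^{c(t-s)}$ kernel claimed in \eqref{Q_est}. A minor but crucial point is the order of operations: the non-negative term $\eta(t)$ on the left-hand side must be dropped \emph{before} Gronwall is applied to $\eta$, so that the forcing in the Gronwall step is $\gamma$ alone rather than $\gamma-\beta$.
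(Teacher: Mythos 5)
Your proof is correct and follows essentially the same route as the paper: apply Lemma \ref{lemstochdiff} to the quadratic forms $x^\top P^{-1}x$ and $x^\top Qx$, absorb the nonlinearity via the averaged monotonicity conditions into a $c\int_0^t\mathbb E[x^\top Xx]\,ds$ term, invoke the Lyapunov inequalities, close with Gronwall, and finish with the spectral decompositions. The only divergence is in the $Q$-estimate: the paper applies the Gronwall variant \eqref{gronwall2} once with the signed forcing $\gamma(t)-\beta(t)$ and then discards the nonnegative left-hand side, whereas you first discard $\beta$, run Gronwall on $\eta$ with forcing $\gamma$ alone, and reinsert via Fubini --- both yield \eqref{Q_est}, but your closing claim that the forcing ``must'' be $\gamma$ alone is not right, since \eqref{gronwall2} imposes no sign condition on the forcing; also note that the term you drop to obtain $\eta(t)\le c\int_0^t\eta(s)\,ds+\gamma(t)$ is $\beta(t)\ge 0$, not $\eta(t)$ as written.
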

\begin{proof}
We find inequalities for $\mathbb E\left[x(t)^\top X x(t)\right]$, where $X\in\{P^{-1}, Q\}$. To do so, we apply Lemma \ref{lemstochdiff} to $X^{\frac{1}{2}}x(t)$ and obtain \begin{align*}
 \frac{d}{dt}\mathbb E\left[x(t)^\top X x(t)\right]&= 2 \mathbb E\left[x(t)^\top X [Ax(t)+Bu(t)+ f(x(t))]\right]  + \sum_{i, j=1}^d \mathbb E\left[x(t)^\top N_i^\top X N_j x(t)\right]k_{ij}.
 \end{align*}
We integrate this equation over $[0, t]$ with $t\leq T$ yielding
\begin{align}\nonumber
\mathbb E\left[x(t)^\top X x(t)\right]&=
\mathbb E\int_0^t\bigg[x(s)^\top\Big(A^\top X+ X A + \sum_{i, j=1}^d  N_i^\top X N_j k_{ij}\Big) x(s)\bigg]ds\\ \nonumber
&\quad+ 2\int_0^t \mathbb E\left[x(s)^\top X \big[B u(s) +f(x(s))\big]\right] ds\\ \nonumber
&\leq \mathbb E\int_0^t\bigg[x(s)^\top\Big((A+c_1 I)^\top X+ X (A+c_1 I) + \sum_{i, j=1}^d  N_i^\top X N_j k_{ij}\Big) x(s)\bigg]ds \\ \label{estimate-mono_gram}
&\quad+ 2\int_0^t \mathbb E\left[x(s)^\top X B u(s) \right] ds + c\int_0^t \mathbb E\left[x(s)^\top X x(s) \right] ds
\end{align}
exploiting \eqref{P_average_monoton}, \eqref{Q_average_monoton} and that $x_0=0$. Setting $\alpha(t):=2\int_0^t \mathbb E\left[x(s)^\top X B u(s) \right] ds $ and $X=Q$, we obtain \begin{align*}
\mathbb E\left[x(t)^\top Q x(t)\right]&\leq -\int_0^t\mathbb E\left[x(s)^\top C^\top C x(s)\right]ds + 2\int_0^t \mathbb E\left[x(s)^\top Q B u(s) \right] ds+c\int_0^t \mathbb E\left[x(s)^\top Q x(s) \right] ds\\
&=-\left\|y\right\|_{L^2_t}^2+\alpha(t)+c\int_0^t \mathbb E\left[x(s)^\top Q x(s) \right] ds
\end{align*}
using \eqref{newgram2Q}. Therefore, by \eqref{gronwall2}, we have 
\begin{align*}
   \mathbb E\left[x(t)^\top Q x(t)\right]\leq  \int_0^t (\dot\alpha(s)-\mathbb E\left\|y(s)\right\|_2^2)\expn^{ c (t-s)}ds,
    \end{align*}
    and hence $\int_0^t \mathbb E\left\|y(s)\right\|_2^2 ds\leq  \int_0^t \dot\alpha(s)\expn^{c (t-s)}ds$. Inserting the representation for $x(s)$ in \eqref{eigen_rep} yields
\begin{align*}
\left\|y\right\|_{L^2_t}^2&\leq 2\int_0^t \mathbb E\left[x(s)^\top Q B u(s) \right]\expn^{c (t-s)} ds = 2\int_0^t \mathbb E\left[\Big(Q \sum_{k=1}^n \langle x(s), q_{k}  
\rangle_2 \,q_k\Big)^\top B u(s) \right]\expn^{c (t-s)} ds\\
&= 2 \sum_{k=1}^n \lambda_{Q, k} \int_0^t \mathbb E\left[\langle x(s), q_{k}  
\rangle_2 \;q_k^\top B u(s) \right]\expn^{c (t-s)} ds
\end{align*}
leading to \eqref{Q_est}. With $X=P^{-1}$ in \eqref{estimate-mono_gram}, it holds that \begin{align*}
\mathbb E\left[x(t)^\top P^{-1} x(t)\right]&\leq  -\int_0^t\mathbb E\left[x(s)^\top P^{-1}BB^\top P^{-1} x(s)\right]ds + 2\int_0^t \mathbb E\left[x(s)^\top P^{-1} B u(s) \right] ds \\
&\quad+c\int_0^t \mathbb E\left[x(s)^\top P^{-1} x(s) \right] ds\\
&= \mathbb E \int_0^t \left\|u(s)\right\|_2^2 - \|B^\top P^{-1} x(s)-u(s)\|_2^2ds+c\int_0^t \mathbb E\left[x(s)^\top P^{-1} x(s) \right] ds
\end{align*}
exploiting \eqref{newgram2}. Applying \eqref{gronwall1}, we obtain
\begin{align*}
\mathbb E\left[x(t)^\top P^{-1} x(t)\right]&\leq \mathbb E \int_0^t \left\|u(s)\right\|_2^2 ds+\int_0^t \int_0^s \mathbb E \left\|u(v)\right\|_2^2 dv\, c \expn^{c(t-s)} ds\\
&\leq \expn^{c t} \mathbb E \int_0^t \left\|u(s)\right\|_2^2 ds.
\end{align*}
We further observe that
\begin{align*}
 \langle x(t), p_{k}  \rangle_2^2 &\leq \lambda_{P, k}\; \sum_{i=1}^n \lambda_{P, i}^{-1} \langle x(t), p_{i}  
\rangle_2^2 =\lambda_{P, k} \Big\|\sum_{i=1}^n \lambda_{P, i}^{-\frac{1}{2}} \langle x(t), p_{i}  \rangle_2 \;p_{i}\Big\|_{2}^2
 =\lambda_{P, k} \Big\|P^{-\frac{1}{2}} x(t)\Big\|_{2}^2\\
 &= \lambda_{P, k} \; x(t)^\top P^{-1} x(t),
\end{align*}
so that \eqref{P_est} follows. This concludes the proof.
\end{proof}
Estimate \eqref{P_est} tells us that the state variable is small in the direction of $p_k$ if $\lambda_{P, k}$ is small and in case $c\,T$ is not too large ($c_2-c_1$ is supposed to be little). Consequently, these eigenspaces of $P$ can be neglected in our considerations. The eigenspaces spanned by vectors $q_k$ that are associated to small eigenvalues of $Q$ are also of minor relevance due to \eqref{Q_est}. This inequality shows that such $q_k$ barely contribute to the energy of the output $y$ on each subinterval $[0, t]$.
\begin{remark}\begin{itemize}
\item Following basically the same steps, the result of Theorem \ref{energy_est} holds also true if the more general notion of Gramians in Remark \ref{rem_gen_gram} is used.
\item
Theorem \ref{energy_est} is formulated for $u\in\mathcal U$ since it is based on \eqref{P_average_monoton} and \eqref{Q_average_monoton}. This does not mean that a reduced order model based on  neglecting eigenspaces of $P$ and $Q$ associated to small eigenvalues leads to a bad approximation for $u\in L^2_T\setminus \mathcal U$. This is because \eqref{P_average_monoton} and \eqref{Q_average_monoton} might still almost hold in that cases since suitable Gramians lead to $G_Q$ and $G_{P^{-1}}$ in \eqref{loc_max_functions} being small when they are positive. Then, the estimates in Theorem \ref{energy_est} will approximately hold.
\end{itemize}
\end{remark}

\subsection{Computation of monotonicity Gramians}\label{Sec_comp_gram}

We aim to compute average monotonicity Gramians $P$ and $Q$ for a large set $\mathcal U$ of controls. We choose them as solutions to \eqref{newgram2} and \eqref{newgram2Q}, so that $G_{P^{-1}}$ and $G_Q$ in \eqref{loc_max_functions} have a local maximum in the origin or a saddle point with very few increasing directions. Else, we might have several cases in which the monotonicity condition is immediately violated. This would not allow \eqref{P_average_monoton} and \eqref{Q_average_monoton} to hold for a large $\mathcal U$. On the other hand, it is essential that the area where the monotonicity condition is fulfilled clearly dominates the one where it does not hold. A possible and acceptable scenario in dimension $n=2$ is illustrated in Figure \ref{pic_mon_gap}. Here, the monotonicity gap $G_Q$ is depicted for $f=f^{(2)}$, $c_2=c_f=1$ and $Q=\smat  0.49426 &  0.58159\\
   0.58159  & 0.68542\srix$, a matrix with a large and a small eigenvalue. The blue color stands for small absolute values and red for large ones. $G_Q$ is non positive except for the black areas, where the monotonicity condition is slightly violated.
\begin{figure}[ht]
\center
\includegraphics[width=8cm,height=5.5cm]{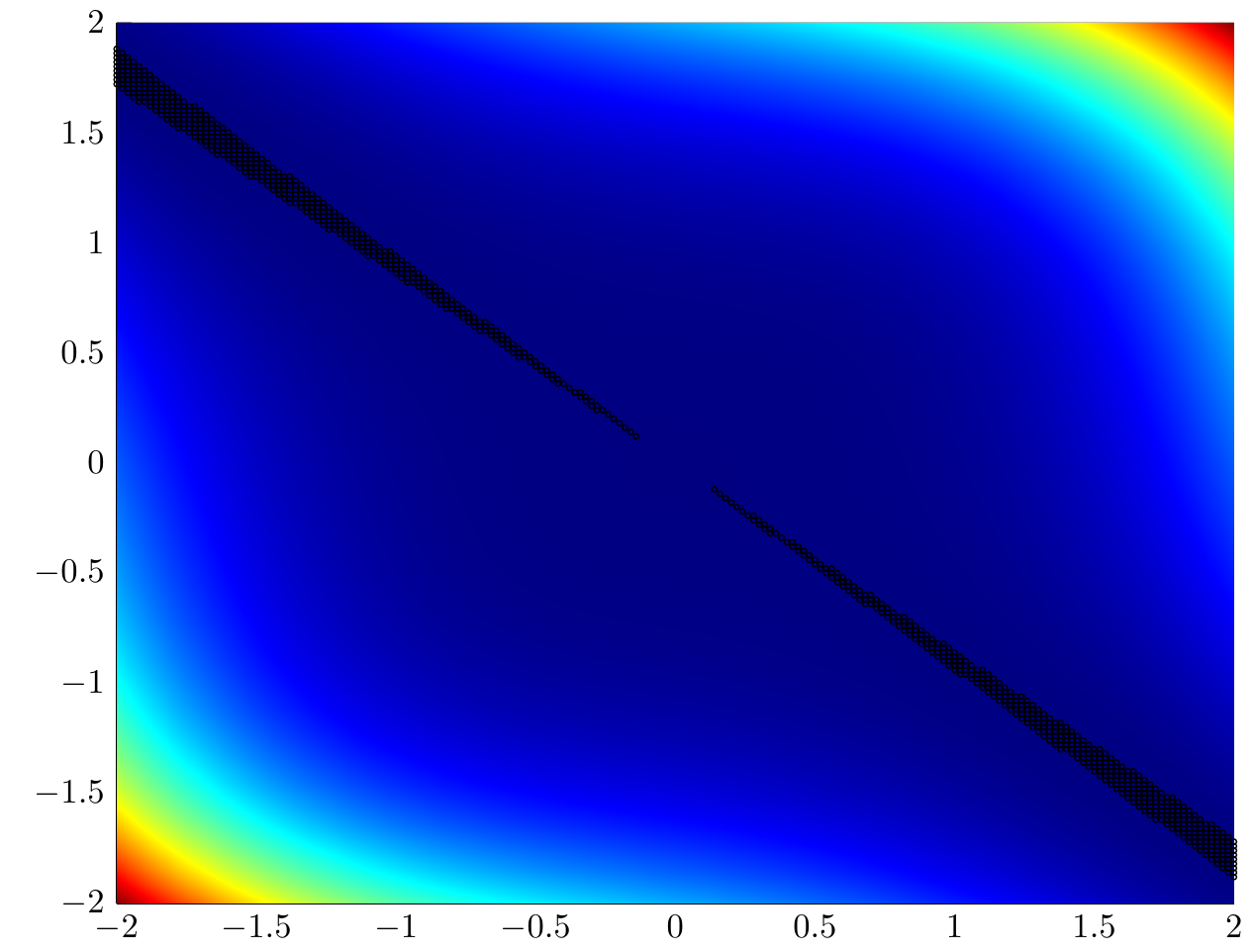}
\caption{$G_Q$ for a special choice of $Q$, $n=2$, $f=f^{(2)}$ and $c_2=c_f=1$. The area in black marks the regions, where $G_Q$ is positive.}\label{pic_mon_gap}
\end{figure}
In the following proposition, a simple criterion for local optimality for $G_{P^{-1}}$ and $G_Q$ is given.
\begin{prop}
Define the function $g(x) = \langle x, X(f(x)-c_2 x)\rangle_2$ with a constant $c_2$, $f$ being twice differentiable and $X>0$. We assume that \begin{align}\label{cond_max}
    f_{x_j}(x)\vert_{x=0} -c_2 e_j = -\tilde c_2 e_j                                                                                                                                                                                                                                                                                        \end{align}
for all $j\in \{1, \dots, n\}$ and $\tilde c_2>0$, where $e_j$ is the $j$-th unit vector in $\mathbb R^n$. Then, $g$ has a local maximum in $x=0$.
\end{prop}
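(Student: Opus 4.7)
The plan is to verify the standard second-order sufficient condition for a strict local maximum at the origin: namely that $g(0)=0$, $\nabla g(0)=0$, and $\nabla^2 g(0)$ is negative definite. Throughout I use that $X$ is symmetric (since $X>0$) and that $f(0)=0$, which is explicitly assumed in Section~\ref{stochstabgen} to make $x=0$ an equilibrium of the uncontrolled state equation.

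First I would observe $g(0) = \langle 0, X(f(0) - 0)\rangle_2 = 0$. Then I would compute the gradient from $g(x) = x^\top X f(x) - c_2\,x^\top X x$ by the product rule, obtaining
\begin{equation*}
\nabla g(x) \;=\; X f(x) + (Jf(x))^\top X x - 2c_2\, X x,
\end{equation*}
where $Jf$ is the Jacobian of $f$. Plugging in $x=0$ kills the last two terms trivially and the first term vanishes because $f(0)=0$, so $\nabla g(0)=0$.

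Next I would compute the Hessian at the origin. The key observation is that the third-derivative contribution (which comes from differentiating $Jf(x)^\top Xx$ a second time and produces a term of the form $\sum_{k,\ell}x_k X_{k\ell}\,\partial_i\partial_j f_\ell(x)$) carries a factor of $x_k$ and therefore drops out at $x=0$. The remaining terms only involve $Jf(0)$. The hypothesis \eqref{cond_max} precisely says $Jf(0) = (c_2 - \tilde c_2)\,I$, and a short bookkeeping (using symmetry of $X$) yields
\begin{equation*}
\nabla^2 g(0) \;=\; -2\tilde c_2\, X.
\end{equation*}
Since $\tilde c_2 > 0$ and $X > 0$, this Hessian is negative definite, so the second-order sufficient condition gives a strict local maximum at $x=0$.

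The computation is essentially routine; the only mildly delicate step is tracking which Hessian contributions survive at the origin. The symmetry of $X$ and the vanishing of $f(0)$ are used to combine the two symmetric cross-terms $Xf'(0)$ and $f'(0)^\top X$ into the clean expression $-2\tilde c_2 X$, and this is where the assumption \eqref{cond_max} (that $Jf(0)$ is a scalar multiple of the identity shifted by $c_2$) plays its role. Once this is in place the proposition follows from elementary multivariable calculus.
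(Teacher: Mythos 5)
Your proposal is correct and follows essentially the same route as the paper: compute the gradient and Hessian of $g$, observe that the gradient vanishes at the origin (the paper implicitly uses $f(0)=0$ here, which you make explicit), note that the second-derivative term of $f$ in the Hessian carries a factor of $x$ and so drops out at $x=0$, and conclude via \eqref{cond_max} that $\nabla^2 g(0) = -2\tilde c_2 X < 0$.
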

\begin{proof}
It is easy to check that $x=0$ is an extreme value since $g_{x_i}(x) = \langle e_i, X(f(x)-c_2 x) \rangle_2 + \langle x, X(f_{x_i}(x)-c_2 e_i) \rangle_2$ is zero at the origin. Moreover, we derive $g_{x_i x_j}(x) = \langle e_i, X(f_{x_j}(x)-c_2 e_j) \rangle_2 + \langle e_j, X(f_{x_i}(x)-c_2 e_i) \rangle_2+\langle x, X f_{x_i x_j}(x) \rangle_2$. Therefore, we find $\big(g_{x_i x_j}(0)\big)_{i, j=1, \dots, n} = -2 \tilde c_2 X<0$ which concludes the proof.
\end{proof}
Condition \eqref{cond_max} is, e.g., satisfied if polynomials are considered. We can therefore observe that $G_{P^{-1}}$ and $G_Q$ have a local maximum for the choices of $f$ given in Example \ref{example1} in case $c_2$ is sufficiently large. In particular, we fix $c_2 \geq c_f$, since this means that $G_{P^{-1}}$ and $G_Q$ are non-positive along the bases of eigenvectors used in \eqref{eigen_rep}. This is a consequence of assumption \eqref{monotonicity_cond}. Theorem \ref{energy_est} motivates to choose $c_1$, so that $c_2-c_1$ is a small positive number. If possible, we even set $c_1= c_2$ providing $c=0$. If $c_1>0$, the possibility of this choice also depends on weather \eqref{shifted_stab} is  satisfied. We then compute the solution to \eqref{newgram2} having a minimal trace and the solution to the equality in \eqref{newgram2Q}. This provides that $G_{P^{-1}}$ and $G_Q$ are non-positive on large parts of $\mathbb R^n$ for the particular functions introduced in Example \ref{example1} and only small positive values are taken on the other area. This leads to \eqref{P_average_monoton} and \eqref{Q_average_monoton} for a large $\mathcal U$. This is what we observe from numerical experiments, where $A$ is a discrete Laplacian.
Let us now briefly sketch how such a minimal trace monotonicity Gramian $P$ is computed. We reformulate \eqref{newgram2} by multiplying it with $P$ from the left and from the right leading to \begin{align}\label{tran_gram_eq}
 (A+c_1 I)P+P(A+c_1 I)^\top+BB^\top  +\sum_{i, j=1}^d P N_i^\top P^{-1} N_j k_{ij} P \leq 0.
 \end{align}
Since $\sum_{i, j=1}^d P N_i^\top k_{ij} P^{-1} N_j P = P \smat N_1^\top&\dots &N_d^\top\srix(K\otimes P^{-1}) \smat N_1^\top&\dots &N_d^\top\srix^\top P$, we obtain the following equivalent representation
 \begin{align}\label{LMI}
 \begin{bmatrix}
 {(A+c_1 I)P+P(A+c_1 I)^\top+BB^\top} & {P \smat N_1^\top&\dots &N_d^\top\srix } \\
 {\smat N_1^\top&\dots &N_d^\top\srix^\top P }& {-K^{-1}\otimes P }
 \end{bmatrix}\leq 0
\end{align}
for \eqref{tran_gram_eq} based on Schur complement conditions for the definiteness of a matrix. Here, we need to further assume that $K$ is invertible. Now, we can use a linear matrix inequality solver to find a solution to the minimization of $\trace(P)$ subject to \eqref{LMI} and $P>0$. In this paper, we use YALMIP and MOSEK \cite{mosek, yalmip} for an efficient computation of $P$.\smallskip

In general, a good choice for $P$ and $Q$ guaranteeing \eqref{P_average_monoton} and \eqref{Q_average_monoton} for many different controls always depends on the particular nonlinearity $f$. Therefore, no universal recommendation can be given here.

\subsection{Extension under one-sided Lipschitz continuity}\label{sec_lip_gram}
Many functions $f$ satisfying \eqref{monotonicity_cond}  are also one-sided Lipschitz continuous. However, we require an extended version of this continuity concept in the context of the error analysis in Section \ref{sec_error_analysis}. In detail the following  inequalities are supposed to hold:\begin{align}\label{one_sided_lip_cond}
  \langle x\pm z, f(x)\pm f(z) \rangle_2 \leq c_f   \left\|x\pm z\right\|_2^2,                                                                                                                                                                                                                                                                                                                                                                                                                                                                                                                                                                                                                                                                                                                          \end{align}
for all $x, z\in \mathbb R^n$ and a constant $c_f$. Condition \eqref{one_sided_lip_cond} will later inspire the extended definition of Gramians. Notice that one-sided Lipschitz continuity is defined with a minus in \eqref{one_sided_lip_cond} but we additionally ask for this property when replacing each minus by a plus. In this context, let us look at the functions of Example \ref{example1} again. We begin with $f^{(2)}$ and $f^{(3)}$ and show that \eqref{one_sided_lip_cond} is satisfied.
\begin{example}\label{example5}
Inserting $f^{(3)}(x)=x-\left\|x\right\|_2^2 x$ below yields
 \begin{align*}
\langle x\pm z,  f^{(3)}(x)\pm f^{(3)}(z)\rangle_2 =  \left\|x\pm z\right\|_2^2 - \langle x\pm z,  \left\|x\right\|_2^2 x \pm \left\|z\right\|_2^2 z\rangle_2.
\end{align*}
Now, we find that  \begin{align*}
 \langle x\pm z,  \left\|x\right\|_2^2 x \pm \left\|z\right\|_2^2 z\rangle_2 &= \left\|x\right\|_2^4+\left\|z\right\|_2^4\pm \langle x,  z\rangle_2 (\left\|x\right\|_2^2  + \left\|z\right\|_2^2)\geq \left\|x\right\|_2^4+\left\|z\right\|_2^4 -0.5(\left\|x\right\|_2^2  + \left\|z\right\|_2^2)^2\\
 &= 0.5(\left\|x\right\|_2^2  - \left\|z\right\|_2^2)^2\geq 0
\end{align*}
and hence \eqref{one_sided_lip_cond} holds with $c_f=1$ in case $f=f^{(3)}$. We obtain from $f^{(2)}(x)=x-x^{\circ 3}$ that\begin{align*}
\langle x- z,  f^{(2)}(x)- f^{(2)}(z)\rangle_2 =  \left\|x- z\right\|_2^2 - \langle x- z,  x^{\circ 3} - z^{\circ 3}\rangle_2.
\end{align*}
Since we have that \begin{align*}
 \langle x- z,  x^{\circ 3} - z^{\circ 3}\rangle_2&=\sum_{i=1}^n (x_i^4 +z_i^4 - z_i x_i^3-x_i z_i^3)=\sum_{i=1}^n (x_i-z_i)^2 (x_i^2 +z_i^2 + z_i x_i)\\
 &\geq \sum_{i=1}^n (x_i-z_i)^2 0.5 (x_i^2 +z_i^2 + 2 z_i x_i)\geq 0,
\end{align*}
we obtain $\langle x- z,  f^{(2)}(x)- f^{(2)}(z)\rangle_2 \leq  \left\|x- z\right\|_2^2$ and consequently the point symmetry of $f^{(2)}$ yields  \begin{align*}
\langle x+ z,  f^{(2)}(x)+ f^{(2)}(z)\rangle_2=\langle x- (-z),  f^{(2)}(x)- f^{(2)}(-z)\rangle_2 \leq  \left\|x- (-z)\right\|_2^2=\left\|x+z\right\|_2^2.
\end{align*}
Therefore, $c_f=1$ in \eqref{one_sided_lip_cond} for $f= f^{(2)}$.
\end{example}
As we will see below, $f^{(1)}$ is also one-sided Lipschitz but \eqref{one_sided_lip_cond} is not fulfilled if a plus is considered.
\begin{example} \label{example4}
Using $f^{(1)}(x) = (1+a) x^{\circ 2} - x^{\circ 3}-a x$ leads to
 \begin{align*}
\langle x-z,  f^{(1)}(x)-f^{(1)}(z)\rangle_2 &=  -a\left\|x-z\right\|_2^2+ \langle x-z,  (1+a)(x^{\circ 2}-z^{\circ 2})-(x^{\circ 3}-z^{\circ 3})\rangle_2.
\end{align*}
We obtain that 
 \begin{align*}
&\langle x-z,  (1+a)(x^{\circ 2}-z^{\circ 2})-(x^{\circ 3}-z^{\circ 3})\rangle_2 =\sum_{i=1}^n [(1+a) (x_i^3-z_i x_i^2-x_i z_i^2+z_i^3)-x_i^4+x_i z_i^3+z_i x_i^3-z_i^4]\\
&=\sum_{i=1}^n(x_i-z_i)^2[(1+a)(x_i+z_i)-x_i^2-z_i^2-x_i z_i]\leq \frac{(1+a)^2}{3} \left\|x-z\right\|_2^2
\end{align*}
exploiting that $(1+a)(x_i+z_i)-x_i^2-z_i^2-x_i z_i\leq  \frac{(1+a)^2}{3}$ for all $i\in\{1, \dots, n\}$. Therefore, we have 
 \begin{align*}
\langle x-z,  f^{(1)}(x)-f^{(1)}(z)\rangle_2 \leq  \frac{a^2-a+1}{3}\left\|x-z\right\|_2^2.
\end{align*}
We observe that the one-sided Lipschitz constant is different from the monotonicity constant in Example \ref{example1}. Moreover, we show that \eqref{one_sided_lip_cond} does not hold with a plus. Let $n=1$ and $c_f$ be an arbitrary constant. We fix $x=1$ and $z=\epsilon -1$ with $\epsilon>0$. We obtain 
\begin{align*}
\langle x+z,  f^{(1)}(x)+f^{(1)}(z)\rangle_2 &= \epsilon [-a\epsilon + (1+a) (1+(\epsilon-1 )^2)-(1+(\epsilon-1)^3)]\\
&= \epsilon [2(1+a)-\epsilon^3+(4+a)\epsilon^2- (5+3a)\epsilon]>c_f \epsilon^2 = c_f \left\|x+z\right\|_2^2,
\end{align*}
if $\epsilon$ is sufficiently small and $a>-1$.
\end{example}
Motivated by the one-sided Lipschitz continuity \eqref{one_sided_lip_cond}, a Gramian based inner product shall preserve this property leading to the  following extension of Definition \ref{global_mon_gram}.

\begin{defn}\label{one_side_Lip_gram}
Let $c_1$ and $c_2$ be constants. Then, a pair of matrices $(P, Q)$ with $P, Q>0$ is called global one-sided Lipschitz Gramians if they satisfy
\eqref{newgram2},  \eqref{newgram2Q}  and 
\begin{equation}\label{PQ_lipschitz}
\begin{aligned}
  \langle x+z,  P^{-1} (f(x)+f(z)) \rangle_2 &\leq c_2   \|P^{-\frac{1}{2}}(x+z)\|_2^2,\\   \langle x-z, Q (f(x)-f(z)) \rangle_2 &\leq c_2   \|Q^{\frac{1}{2}}(x-z)\|_2^2                                                                                                                                                                                                                                                                                                                                                                                                                                                                                                                                                                                                                                                                                                                       
                          \end{aligned}
                          \end{equation}
for all $x, z\in\mathbb R^n$.
\end{defn}
\begin{example}
Let $P, Q>0$ be solutions to \eqref{newgram2},  \eqref{newgram2Q} and $f$ be globally Lipschitz with $-f(x)= f(-x)$. Then, we can always construct  global one-sided Lipschitz Gramians, since for $X\in  \{P^{-1}, Q\}$ satisfying \eqref{newgram2} and \eqref{newgram2Q}, we have that \begin{align*}\langle X^{\frac{1}{2}}(x\pm z), X^{\frac{1}{2}} (f(x)\pm f(z)) \rangle_2 \leq  \|X^{\frac{1}{2}}(x\pm z)\|_2 \|X^{\frac{1}{2}}(f(x)\pm f(z))\|_2\leq c_2   \|X^{\frac{1}{2}}(x\pm z)\|_2^2\end{align*} for some suitable constant $c_2$.
\end{example}

If \eqref{PQ_lipschitz} is satisfied for $z=0$, $P$ and $Q$ are global monotonicity Gramians. We will see later that a reduced order model based on the Gramians introduced in Definition \ref{one_side_Lip_gram} will lead to error estimates for all controls $u\in L^2_T$. However, as in the global monotonicity Gramian case, it might be inefficient to choose a Gramian allowing to derive estimates for all $u$. The error analysis will show that it is actually enough to have \eqref{PQ_lipschitz} for large/essential sets of pairs $(x, z)\in\mathbb R^n\times\mathbb R^n$ in order to find a reasonable error criterion for a large number of different controls, i.e., the one-sided Lipschitz gaps \begin{equation}\label{PQ_lipschitz_gap}
\begin{aligned}
  G_{P^{-1}}^+(x, z)&:=\langle x+z,  P^{-1} (f(x)+f(z)) \rangle_2 - c_2   \|P^{-\frac{1}{2}}(x+z)\|_2^2,\\  
  G_{Q}^-(x, z)&:=\langle x-z, Q (f(x)-f(z)) \rangle_2 -c_2   \|Q^{\frac{1}{2}}(x-z)\|_2^2                                                                                                                                                                                                                                                                                                                                                                                                                                                                                                                                                                                                                                                                                                                       
                          \end{aligned}
                          \end{equation}
in \eqref{PQ_lipschitz} are mainly negative  but also small positive values will be allowed. We postpone the discussion of a weaker version of Definition \ref{one_side_Lip_gram} to Section \ref{sec_error_analysis}.
\begin{remark}
One-sided Lipschitz Gramians are again special solutions of linear matrix inequalities for reasons of accessibility. Analogue to Remark \ref{rem_gen_gram} this concept can be formulated more generally. Adding twice \eqref{PQ_lipschitz} to the respective inequality in \eqref{newgram2} and  \eqref{newgram2Q}  
 leads to \begin{align}\label{generalized_lip_Gram_P}
 &\hspace{-0.23cm} (x+z)^\top \Big( A^\top P^{-1}+P^{-1}A+\hspace{-0.05cm}\sum_{i, j=1}^d N_i^\top P^{-1} N_j k_{ij} \Big)(x+z) + 2 \langle x+z,  P^{-1} (f(x)+f(z)) \rangle_2\\ \nonumber
 &\quad\leq -\| B^\top P^{-1} (x+z)\|_2^2 +c\|P^{-\frac{1}{2}}(x+z)\|_2^2,\\ \label{generalized_lip_Gram_Q}
  &\hspace{-0.23cm} (x-z)^\top \Big(A^\top Q+QA+\hspace{-0.05cm}\sum_{i, j=1}^d N_i^\top Q N_j k_{ij} \Big)(x-z)  + 2 \langle x-z, Q (f(x)-f(z)) \rangle_2\\ \nonumber
  &\quad\leq -\| C(x-z)\|_2^2+c\|Q^{\frac{1}{2}}(x-z)\|_2^2
             \end{align}
for all $x, z\in\mathbb R^n$ with $c\geq 0$. We will see that this structure is what one requires  to achieve a suitable global error bound  for all $u\in L^2_T$. Notice that $z=0$ leads to \eqref{generalized_mon_Gram_P} and \eqref{generalized_mon_Gram_Q}, respectively. We will not discuss a definition of Gramians $P$ and $Q$ via \eqref{generalized_lip_Gram_P}  and \eqref{generalized_lip_Gram_Q} in further detail but will refer to them within the error analysis.
\end{remark}
Now, let us briefly discuss the existence of global one-sided Lipschitz Gramians.
\begin{prop}\label{lip_gram_exist}
Given a matrix $X>0$ satisfying \eqref{shifted_Lyap} for some constant $c_1$ and  
\begin{align*}
\langle x\pm z, X (f(x)\pm f(z))\rangle_2 \leq c_2  \|X^{\frac{1}{2}}(x\pm z)\|_2^2                                                                                                                                                                                                                                                                                                                                                                                                                                                                                                                                                                                                                                                                                                                    \end{align*}
for all $x, z\in \mathbb R^n$ and a constant $c_2$. Then, global one-sided Lipschitz Gramians exist with these constants.
\end{prop}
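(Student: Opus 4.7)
The plan is to mirror the strategy used in Proposition \ref{propzugram2} for global monotonicity Gramians, since the one-sided Lipschitz assumption on $X$ scales under multiplication by a positive constant in exactly the same way as the monotonicity condition did. Namely, if $X$ satisfies the hypothesis, then $\gamma X$ satisfies the analogous one-sided Lipschitz estimate with the same constant $c_2$ (once one passes to the rescaled norm), while multiplying the shifted Lyapunov inequality by $\gamma$ produces a positive-definite slack $\gamma Y$ whose size can be tuned by choosing $\gamma$ small (to absorb a $BB^\top$-term) or large (to absorb a $C^\top C$-term).

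Concretely, I would denote by $-Y < 0$ the left-hand side of \eqref{shifted_Lyap} and, for any $\gamma > 0$, write
\begin{align*}
(A + c_1 I)^\top(\gamma X) + (\gamma X)(A + c_1 I) + \sum_{i,j=1}^d N_i^\top(\gamma X)N_j k_{ij} = -\gamma Y.
\end{align*}
First, I set $P := (\gamma X)^{-1}$. Since $(\gamma X) B B^\top (\gamma X) = \gamma^2 X B B^\top X$ grows quadratically in $\gamma$ whereas $\gamma Y$ is linear, for sufficiently small $\gamma$ we have $-\gamma Y \leq -(\gamma X) BB^\top (\gamma X)$, which is precisely \eqref{newgram2}. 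For the Lipschitz estimate, $P^{-1} = \gamma X$ gives
\begin{align*}
\langle x+z, P^{-1}(f(x)+f(z))\rangle_2 = \gamma\langle x+z, X(f(x)+f(z))\rangle_2 \leq \gamma c_2\, \|X^{1/2}(x+z)\|_2^2 = c_2\, \|P^{-1/2}(x+z)\|_2^2,
\end{align*}
so the first line of \eqref{PQ_lipschitz} holds with the same $c_2$.

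Second, I set $Q := \gamma X$. Since $C^\top C$ is independent of $\gamma$ while $\gamma Y$ grows linearly, taking $\gamma$ sufficiently large yields $-\gamma Y \leq -C^\top C$, which is \eqref{newgram2Q}. The one-sided Lipschitz bound with the minus sign transfers in the identical fashion:
\begin{align*}
\langle x-z, Q(f(x)-f(z))\rangle_2 = \gamma\langle x-z, X(f(x)-f(z))\rangle_2 \leq c_2\, \|Q^{1/2}(x-z)\|_2^2.
\end{align*}
I would then remark that the two values of $\gamma$ (small for $P$, large for $Q$) can in general be chosen independently, so both Gramians coexist.

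There is really no hard step here; the only thing to watch is the direction of the scaling in $\gamma$ for each of the two inequalities and the fact that the Lipschitz hypothesis on $X$ is homogeneous of degree one in $X$, so multiplying by any positive $\gamma$ preserves it with the same constant $c_2$. This is exactly parallel to the monotonicity case and relies on no new ideas.
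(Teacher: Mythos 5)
Your proposal is correct and follows exactly the route the paper intends: the published proof is omitted with the remark that it ``uses the same argument as in Proposition \ref{propzugram2},'' and your scaling argument (small $\gamma$ for $P=(\gamma X)^{-1}$, large $\gamma$ for $Q=\gamma X$, with the degree-one homogeneity of the one-sided Lipschitz estimate in $X$) is precisely that argument carried over.
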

\begin{proof}
 The proof uses the same argument as in Proposition \ref{propzugram2} and is therefore omitted.
\end{proof}

Example \ref{example4} indicates that the  global one-sided Lipschitz Gramian $P$ might not be well-defined in case $f=f^{(1)}$.

\section{Particular reduced order model}\label{sec_BT}

We select a nonsingular
$S\in\mathbb{R}^{n\times n}$ that we use to simultaneously diagonalize Gramians $P$ and $Q$. This means that the bases of eigenvectors $(p_k)$ and $(q_k)$ in \eqref{eigen_rep} will be the canonical basis of $\mathbb R^n$. Consequently, 
by Theorem \ref{energy_est}, unimportant directions can be identified with components in the transformed state variable that are associated with small diagonal entries of the diagonalized Gramians. In particular, the transformation matrix defines the new state by $x_n=Sx$. Inserting this into \eqref{original_system}  leads to an equivalent stochastic
system with coefficients
\begin{align}\label{coef_trans}
  (A_n, B_n, f_n, N_{n,i}, C_n):=(SAS^{-1},SB, S f(S^{-1}\cdot), S N_iS^{-1},CS^{-1})
\end{align}
instead of the original ones $(A, B, f, N_i,C)$, i.e., \begin{align}\label{stochstatebal}
             dx_n(t) = [A_n x_n(t)+B_n u(t)+ f_n\left(x_n(t)\right)]dt+\sum_{i=1}^d N_{n,i}\left(x_n(t-)\right)dM_i(t),\quad
            y(t) = C_n x_n(t),
\end{align}
with $t\in [0, T]$ and $x_n(0)=0$.
The new system \eqref{stochstatebal} has the same input $u$ and output $y$. Moreover, 
properties like asymptotic stability are not affected. However, the Gramians are different. These are given in the following proposition, where the precise diagonalizing transformation is stated.
\begin{prop}\label{prop_bal}
 Suppose that $S$ is an invertible matrix. If $P$ and $Q$ are global/average monotonicity or one-sided Lipschitz Gramians of \eqref{original_system} according to Definitions \ref{global_mon_gram}, \ref{def_av_mon_Gram} or \ref{one_side_Lip_gram}. Then, $P_n= SPS^\top$ and $Q_n=S^{-\top}QS^{-1}$ are the respective Gramians in the transformed setting \eqref{stochstatebal}. Given that $P, Q>0$, we find that  $P_n=Q_n = \Sigma_n= \diag(\sigma_1,\ldots,\sigma_n) $ using the balancing transformation \begin{align}\label{bal_transform}
       S=\Sigma_n^{\frac{1}{2}} U^\top L_P^{-1},                                                                                                                                                                                                                                                                                                                                                                                                                                                                                                                                                                                                                                                                                                                                                                                          \end{align} 
where $P=L_PL_P^\top$ and $L_P^\top QL_P=U\Sigma_n^2 U^\top$ is a spectral factorization  with an orthogonal $U$.
\end{prop}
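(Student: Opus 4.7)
The plan is to split the argument into two parts: a change-of-coordinates invariance step that shows $P_n = SPS^\top$ and $Q_n = S^{-\top}QS^{-1}$ are Gramians of the transformed system \eqref{stochstatebal}, and a direct verification that the specific choice \eqref{bal_transform} simultaneously diagonalizes them to $\Sigma_n$.

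For the first step, I would substitute $A_n = SAS^{-1}$, $N_{n,i} = SN_iS^{-1}$, $C_n = CS^{-1}$ and $Q_n = S^{-\top}QS^{-1}$ into the left-hand side of the observability inequality \eqref{newgram2Q} written for the transformed system. Every middle factor $S^{\pm 1}$ cancels pairwise, leaving a sandwich
\[
  S^{-\top}\!\Bigl[(A+c_1 I)^\top Q + Q(A+c_1 I) + \sum_{i,j=1}^d N_i^\top Q N_j k_{ij}\Bigr] S^{-1} \le -S^{-\top}C^\top C S^{-1} = -C_n^\top C_n,
\]
which is exactly \eqref{newgram2Q} for the transformed data. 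An analogous sandwich calculation, now with $S$ on the outside and $P_n^{-1} = S^{-\top}P^{-1}S^{-1}$, reproduces \eqref{newgram2} using $B_n = SB$.

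Next I would verify the monotonicity/Lipschitz side conditions. Since $f_n(x) = S f(S^{-1}x)$, the substitution $y := S^{-1}x$ (and, for \eqref{PQ_lipschitz}, also $w := S^{-1}z$) gives the identities
\[
  \langle x, Q_n f_n(x)\rangle_2 = \langle y, Q f(y)\rangle_2, \qquad \|Q_n^{\frac12}x\|_2^2 = \|Q^{\frac12} y\|_2^2,
\]
together with the analogous relations for $P_n^{-1}$. Hence the point-wise inequalities in \eqref{PQ_monoton} and \eqref{PQ_lipschitz} transfer with identical constants $c_1, c_2$. For the average versions \eqref{P_average_monoton}--\eqref{Q_average_monoton}, the one observation that needs stressing is that if $x$ solves \eqref{stochstatenew} under control $u$ with $x_0 = 0$, then $x_n(t) := Sx(t)$ solves \eqref{stochstatebal} with the same $u$ and zero initial state; taking expectations and integrating over $[0,t]$ therefore commutes with the above substitution, so \eqref{P_average_monoton}--\eqref{Q_average_monoton} survive in the new coordinates for the same set $\mathcal U$.

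The balancing identities then reduce to a short direct computation. With $P = L_P L_P^\top$, orthogonality $U^\top U = I$, and the spectral factorization $L_P^\top Q L_P = U\Sigma_n^2 U^\top$, I obtain
\[
  S P S^\top = \Sigma_n^{\frac12} U^\top L_P^{-1}\, L_P L_P^\top\, L_P^{-\top} U\, \Sigma_n^{\frac12} = \Sigma_n,
\]
and similarly $S^{-\top} Q S^{-1} = \Sigma_n^{-\frac12} U^\top (L_P^\top Q L_P) U \Sigma_n^{-\frac12} = \Sigma_n^{-\frac12}\Sigma_n^{2}\Sigma_n^{-\frac12} = \Sigma_n$. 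No genuine obstacle arises; the entire argument is algebraic conjugation plus the substitution $y=S^{-1}x$. The only non-cosmetic step is the pathwise identification $x_n = Sx$, which is what links the integrated monotonicity condition in the two coordinate systems and justifies that the notion of ``average'' Gramian is itself coordinate-covariant.
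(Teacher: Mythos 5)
Your proposal is correct and follows essentially the same route as the paper's proof: conjugating the Lyapunov-type inequalities by $S^{-\top}(\cdot)S^{-1}$, transferring the pointwise and averaged monotonicity/Lipschitz conditions via the substitution $y=S^{-1}x$ together with the pathwise identification $x_n=Sx$, and then verifying $SPS^\top=\Sigma_n=S^{-\top}QS^{-1}$ by direct computation from \eqref{bal_transform}. No substantive differences.
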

\begin{proof}
We multiply \eqref{newgram2} and \eqref{newgram2Q} with $S^{-\top}$ from the left and with $S^{-1}$ from the right hand side. Consequently, we see that
 $SPS^\top$ and $S^{-\top}QS^{-1}$ satisfy these inequalities under the coefficients in \eqref{coef_trans}.
Moreover, \eqref{PQ_monoton} is preserved under this transformation, since                                       
\begin{align*}
  \langle x,  P_n^{-1} f_n(x) \rangle_2 &= 
 \langle x,  S^{-\top} P^{-1} S^{-1} Sf(S^{-1}x) \rangle_2  = \langle S^{-1} x,  P^{-1} f(S^{-1}x) \rangle_2 
  \leq c_2   \|P^{-\frac{1}{2}}S^{-1}x\|_2^2\\
  &=c_2   \|P^{-\frac{1}{2}}_n x\|_2^2  \quad \text{and}\\
  \langle x, Q_n f_n(x) \rangle_2&= \langle x, S^{-\top} Q S^{-1} Sf(S^{-1}x) \rangle_2= \langle S^{-1}x,  Q f(S^{-1}x) \rangle_2 \leq c_2   \|Q^{\frac{1}{2}}S^{-1}x\|_2^2    = c_2   \|Q^{\frac{1}{2}}_nx\|_2^2.                                                                                                                                                                                                                                                                                                                                                                                                                                                                                                                                                                                                                                                                                                                      
                          \end{align*}
 Analogue, we can prove that the one-sided Lipschitz conditions \eqref{PQ_lipschitz} hold under the transformation. With $x_n(s)= x_n(s, 0, u)$  given $u\in\mathcal U$, we now find  \begin{align*}
  \langle x_n(s),  P_n^{-1} f_n(x_n(s)) \rangle_2 =  \langle x(s),  P^{-1} f(x(s)) \rangle_2\quad\text{and}\quad  \langle x_n(s),  Q_n f_n(x_n(s)) \rangle_2 =  \langle x(s),  Q f(x(s)) \rangle_2,
  \end{align*}
  as well as \begin{align*}
 \|P_n^{-\frac{1}{2}} x_n(s)\|_2^2 = \|P^{-\frac{1}{2}}x(s)\|_2^2\quad\text{and}\quad \|Q_n^{-\frac{1}{2}} x_n(s)\|_2^2 = \|Q^{\frac{1}{2}}x(s)\|_2^2,
             \end{align*}
so that the average monotonicity conditions \eqref{P_average_monoton} and \eqref{Q_average_monoton} still hold for the same set $\mathcal U$. We use \eqref{bal_transform} and obtain $P_n= \Sigma_n^{\frac{1}{2}} U^\top L_P^{-1} P L_P^{-\top} U \Sigma_n^{\frac{1}{2}}=\Sigma_n$ as well as $Q_n= \Sigma_n^{-\frac{1}{2}} U^\top L_P^{\top} Q L_P U \Sigma_n^{-\frac{1}{2}}=\Sigma_n$ which concludes the proof.
\end{proof}
We observe that the diagonal entries of the balanced Gramians are $\sigma_i= \sqrt{\lambda_i(PQ)}$. We call them Hankel singular values (HSVs) from now on.
Now, we partition the balanced state $x_n=\begin{bmatrix}
x_{n, 1}\\ x_{n,2}                                         
\end{bmatrix}$ and 
$\Sigma_n=\diag(\Sigma_r,\Sigma_{2, n-r})$, where
  $\Sigma_r= \diag(\sigma_1,\ldots,\sigma_r)$ contains the large and $\Sigma_{2, n-r}=\diag(\sigma_{r+1},\ldots,\sigma_n)$, $r<n$, the small HSVs. The same is done for \eqref{coef_trans} yielding
  \begin{equation}\label{part_bal}
  \begin{aligned}
A_n= \begin{bmatrix}{A}_{r}&\star\\ 
\star&\star\end{bmatrix},\quad B_n &= \begin{bmatrix}{B}_r\\\star\end{bmatrix},\quad N_{n,i}= \begin{bmatrix}{N}_{r,i}&\star\\ 
\star&\star\end{bmatrix}, \quad C_n= \begin{bmatrix}{C}_r &
\star\end{bmatrix}\quad \text{and}\\
f_r(x_r):&=\tilde f_r(\smat x_r\\ 0\srix), \quad \text{where}\quad f_n = \begin{bmatrix}{\tilde f}_r\\\star\end{bmatrix},\quad x_r\in \mathbb R^r,\quad 0\in \mathbb R^{n-r}. 
  \end{aligned}
  \end{equation}
Since $x_{n,2}$ is associated to small values in $\Sigma_{2, n-r}$, we truncate the equation for these variables and remove them from the dynamics of $x_{n, 1}$ and $y$. This results in a reduced system
 \eqref{red_system} with coefficients given by \eqref{part_bal}. Setting $V=V_r$ and $W=W_r$, where \begin{align*}                                                                                                S^{-1}=\begin{bmatrix}                                                                                
 V_r & \star                                                                                                   \end{bmatrix} \quad \text{and}\quad
S^\top=\begin{bmatrix}                                                                                
 W_r & \star                                                                                                   \end{bmatrix},                                                                                                   \end{align*}
we see that our reduced system's structure is of the form as in \eqref{red_mat_projection}. Here, $S$ is given by \eqref{bal_transform}.

\section{Error analysis of Gramian based reduced system}\label{sec_error_analysis}

We consider the reduced system \eqref{red_system} with state dimension $r$ and coefficients like in \eqref{part_bal}. As an intermediate step, let us introduce the same type of reduced model with dimension $k= r, r+1, \dots, n$ which we write as follows:
 \begin{align}\label{bal_par}
 dx_k(t) = [A_{k} x_k(t) +  B_k u(t)+ f_k(x_k(t))]dt + \sum_{i=1}^{d} N_{k, i} x_k(t-) dM_i(t),\quad
 y_{k}(t)= C_{k} x_k(t).
\end{align}
Setting $y_n:=y$, we then observe that 
\begin{align}\label{remove_HSVs_individually}
  \left\|y-y_{r}\right\|
 \leq \sum_{i=r+1}^n \left\| y_{k}-y_{k-1}\right\|,
 \end{align}
where $\left\| \cdot\right\|$ is some function space norm. This 
means that we have to investigate the error $ \left\| y_{k}-y_{k-1}\right\|$ of removing a single HSV. We can derive the reduced system of order $k-1$ from \eqref{bal_par} by setting the last entry of $x_k$ equal to zero. Doing so, we obtain \begin{equation}\label{bal_par_minus1}                                                                                                                                                                                                                                                                                                                                                                                                                                                                                                                        \begin{aligned}
 d\smat x_{k-1}(t)\\0\srix = &\big[A_{k} \smat x_{k-1}(t)\\0\srix  +  B_k u(t)+ f_k\Big(\smat x_{k-1}(t)\\0\srix \Big)-\smat 0 \\ v_0(t)\srix\big]dt\\
 &+ \sum_{i=1}^{d}\big[ N_{k, i} \smat x_{k-1}(t-)\\0\srix -\smat 0 \\ v_i(t-)\srix\big] dM_i(t),\quad
 y_{k-1}(t)= C_{k} \smat x_{k-1}(t)\\0\srix,
\end{aligned}
\end{equation}
where the first $k-1$ rows in the state equation of \eqref{bal_par_minus1}  represent the reduced order model of dimension $k-1$ and $v_0, \dots, v_d$ are (non specified) scalar processes that are introduced to ensure the equality in the last line which can be read as $d 0 = 0 dt + \sum_{i=1}^{d} 0 dM_i(t)$.
\begin{thm}\label{thm_error_bound}
 Let $y$ be the output of \eqref{original_system} with $x(0)=0$ and given the $r$-dimensional reduced system \eqref{red_system} with output $y_r$, coefficients as in \eqref{part_bal} and $x_r(0)=0$. If this reduced system is based on Gramians $P$ and $Q$ satisfying \eqref{newgram2} and \eqref{newgram2Q} for a constant $c_1$. Then,  for all $u\in L^2_T$, we have \begin{align*}
&\sqrt{\mathbb E \int_0^T  \left\|y(s)-y_{r}(s)\right\|_{2}^2 \expn^{c(T-s)} ds}\leq\\
& \sum_{k=r+1}^n  \sqrt{\mathbb E\int_0^T\left[2 G_{Q}^-\Big(V_k x_k(s), V_{k-1}x_{k-1}(s)\Big)+  \sigma_k^2\big(2 G_{P^{-1}}^+\Big(V_k x_k(s), V_{k-1}x_{k-1}(s)\Big) +4\left\| u(s)\right\|_{2}^2 \big)\right]\expn^{c(T-s)} ds}.                                                                                                                                                                                                                                                                                                                                                                                                                                                                                                                                                                                   \end{align*}
where $c=\max\{0, 2(c_2-c_1)\}$ is defined by another constant $c_2$ (e.g. the parameter of Definitions \ref{global_mon_gram}, \ref{def_av_mon_Gram} or \ref{one_side_Lip_gram}) and $G_{P^{-1}}^+$, $G_{Q}^-$ are the associated one-sided Lipschitz gaps in \eqref{PQ_lipschitz_gap}. Moreover, $x_k$ is the reduced state variable of order $k=r, r+1. \dots, n$ and $V_k$ is the associated projection matrix being the first $k$ columns of the inverse $S^{-1}$ of the balancing transformation defined by \eqref{bal_transform}.
\end{thm}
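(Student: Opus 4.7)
The plan is to use the telescoping decomposition already set up in \eqref{remove_HSVs_individually}, namely
\[
\Bigl\|y-y_r\Bigr\|_{L^2_{c,T}} \;\leq\; \sum_{k=r+1}^{n}\bigl\|y_k-y_{k-1}\bigr\|_{L^2_{c,T}},
\]
where $\|\cdot\|_{L^2_{c,T}}^2 := \mathbb E\int_0^T \|\cdot\|_2^2\,\expn^{c(T-s)}\,ds$. This reduces the statement to proving, for each single removed Hankel singular value $\sigma_k$, the individual error bound
\[
\mathbb E\int_0^T\|y_k(s)-y_{k-1}(s)\|_2^2\,\expn^{c(T-s)}\,ds \;\leq\; \mathbb E\int_0^T\Bigl[2G_Q^- + \sigma_k^2\bigl(2G_{P^{-1}}^+ + 4\|u\|_2^2\bigr)\Bigr]\expn^{c(T-s)}\,ds,
\]
with both gaps evaluated at $(V_k x_k(s),V_{k-1}x_{k-1}(s))$.

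The centerpiece is to apply Lemma \ref{lemstochdiff} to two different quadratic forms of the state pair $(x_k,\tilde x_{k-1})$, where $\tilde x_{k-1}=\smat x_{k-1}\\0\srix\in\mathbb R^k$, and to exploit that in the balanced coordinates both Gramians of \eqref{bal_par} coincide with the diagonal matrix $\Sigma_k=\operatorname{diag}(\sigma_1,\ldots,\sigma_k)$. First I would form the error process $e_k:=x_k-\tilde x_{k-1}$ by subtracting \eqref{bal_par_minus1} from \eqref{bal_par} and apply Ito to $e_k^\top \Sigma_k e_k$. The drift terms split into a linear-plus-noise part, which is handled by the $Q$-Lyapunov inequality \eqref{newgram2Q} projected to its $k$-block (yielding the $-\|C_k e_k\|_2^2 = -\|y_k-y_{k-1}\|_2^2$ term), and a nonlinear part $\langle e_k,\Sigma_k(f_k(x_k)-f_k(\tilde x_{k-1}))\rangle_2$, which produces exactly $G_Q^-(V_k x_k,V_{k-1}x_{k-1})$ after lifting through $V_k$ and $V_{k-1}$. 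The residual correction vectors $(v_0,\ldots,v_d)$ that couple the two states through the truncated row must also be absorbed; this is where the factor $\sigma_k$ enters, because only the $k$-th column of $\Sigma_k$ interacts with them.

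In parallel I would apply Ito to the second quadratic form $(x_k+\tilde x_{k-1})^\top \Sigma_k^{-1}(x_k+\tilde x_{k-1})$ and invoke the $P$-Lyapunov inequality \eqref{newgram2} projected to its $k$-block; the ``plus'' sign is forced precisely by the fact that the correction $v_i$ appears with the opposite sign in the two equations. This is what produces the gap $G_{P^{-1}}^+$ rather than $G_{P^{-1}}^-$, and it bounds $\mathbb E\int_0^t \sum_i v_i(s)^2\,ds$ in terms of $\sigma_k^2\,\mathbb E\int_0^t\|u(s)\|_2^2\,ds$ plus $\sigma_k^2\, G_{P^{-1}}^+$. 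Substituting this control of the $v_i$'s back into the estimate from the first Ito application closes the loop and yields the single-step bound after Gronwall (using \eqref{gronwall2}) to extract the weight $\expn^{c(T-s)}$ with $c=\max\{0,2(c_2-c_1)\}$.

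The main obstacle will be the careful bookkeeping in step two: when the $n$-dimensional Lyapunov inequalities for $(\Sigma_n,\Sigma_n^{-1})$ are restricted to the leading $k\times k$ block, the off-diagonal blocks of $A_n$ and $N_{n,i}$ produce coupling terms that have to be identified exactly with $\sigma_k$-weighted contributions of the truncated coordinate, so that the correction processes $v_0,\ldots,v_d$ admit a clean $\sigma_k^2$-scaled estimate. A secondary technical point is that the nonlinear increments $f_k(x_k)-f_k(\tilde x_{k-1})$ and $f_k(x_k)+f_k(\tilde x_{k-1})$ must be expressed in the original $\mathbb R^n$ coordinates via the columns of $S^{-1}=[V_k\;\star]$ so that the gaps $G_Q^-$ and $G_{P^{-1}}^+$, which were defined on $\mathbb R^n$, appear with the arguments $(V_k x_k,V_{k-1}x_{k-1})$ as stated. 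Once these two bookkeeping steps are handled, triangle-inequality summation over $k=r+1,\ldots,n$ gives the theorem.
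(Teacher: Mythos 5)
Your plan follows essentially the same route as the paper's proof: the telescoping bound \eqref{remove_HSVs_individually}, the difference/sum processes $x_k \mp \smat x_{k-1}\\0\srix$ with the quadratic forms $\Sigma_k$ and $\Sigma_k^{-1}$ restricted from the balanced Gramian inequalities, the $\sigma_k$-scaling of the correction processes $v_0,\dots,v_d$, Gronwall in the form \eqref{gronwall2}, and the identification of the nonlinear terms with $G_Q^-$ and $G_{P^{-1}}^+$ via the columns $V_k$ of $S^{-1}$. This is the paper's argument in all essential respects.
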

\begin{kor}\label{cor_bound}
 Given the assumptions of Theorem \ref{thm_error_bound}, let $P$ and $Q$ be global one-sided Lipschitz Gramians according to Definition \ref{one_side_Lip_gram}. Then, the following bound holds: \begin{align}\label{classical_bound}
\sqrt{\mathbb E \int_0^T  \left\|y(s)-y_{r}(s)\right\|_{2}^2 \expn^{c(T-s)} ds}\leq 
 2\sum_{k=r+1}^n \sigma_k  \sqrt{\mathbb E\int_0^T  \left\| u(s)\right\|_{2}^2 \expn^{c(T-s)} ds}                                                                                                                                                                                                                                                                                                                                                                                                                                                                                                                                                                                  \end{align}
 for all $u\in L^2_T$. The same bound is established if the Gramians are defined by \eqref{generalized_lip_Gram_P}  and \eqref{generalized_lip_Gram_Q}.
\end{kor}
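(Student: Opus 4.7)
The plan is to read the first claim as an immediate specialization of Theorem \ref{thm_error_bound}, and to justify the second by rerunning the proof of that theorem under the weaker bundled hypothesis.

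For the first claim, the key observation is that Definition \ref{one_side_Lip_gram} is precisely the requirement that the one-sided Lipschitz gaps $G_{P^{-1}}^+$ and $G_Q^-$ from \eqref{PQ_lipschitz_gap} be pointwise non-positive on all of $\mathbb R^n\times\mathbb R^n$. Consequently, at every $s$ and $\omega$, evaluating these gaps at the random pair $(V_k x_k(s),V_{k-1}x_{k-1}(s))$ produces non-positive quantities, and the same holds after taking expectation and integrating against the non-negative weight $\expn^{c(T-s)}$ over $[0,T]$. Dropping both $G$-terms inside each square root in the bound of Theorem \ref{thm_error_bound} leaves only $4\sigma_k^2\,\mathbb E\int_0^T\|u(s)\|_2^2\,\expn^{c(T-s)}\,ds$; pulling $2\sigma_k$ out of the square root and summing over $k=r+1,\dots,n$ yields \eqref{classical_bound}.

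The second claim is less automatic because \eqref{generalized_lip_Gram_P} and \eqref{generalized_lip_Gram_Q} bundle the Gramian relation with the Lipschitz gap, and Theorem \ref{thm_error_bound} is phrased through the split objects. I would therefore revisit the proof of that theorem: in the spirit of Theorem \ref{energy_est}, it centres on an Itô/Lyapunov identity for the quadratic forms $\mathbb E[(V_k x_k(t)-V_{k-1}x_{k-1}(t))^\top X (V_k x_k(t)-V_{k-1}x_{k-1}(t))]$ with $X\in\{Q_k,P_k^{-1}\}$, whose time derivative reproduces exactly the left-hand sides of \eqref{generalized_lip_Gram_Q} and \eqref{generalized_lip_Gram_P} evaluated at $(V_k x_k,V_{k-1}x_{k-1})$, modulo a control cross term. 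In the split setting one invokes \eqref{newgram2Q} together with pointwise non-positivity of $G_Q^-$ to majorise this derivative by $-\|C(x-z)\|_2^2+c\|Q^{1/2}(x-z)\|_2^2$; in the bundled setting \eqref{generalized_lip_Gram_Q} supplies precisely the same upper bound in a single step, and \eqref{generalized_lip_Gram_P} handles the dual computation. The ensuing Grönwall step and the triangle inequality over $k$ are untouched, and \eqref{classical_bound} follows again.

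The only nontrivial point is this second part, and it is essentially bookkeeping: one must locate in the proof of Theorem \ref{thm_error_bound} the single Lyapunov step where the split hypothesis enters, and check that the combined inequalities \eqref{generalized_lip_Gram_P}--\eqref{generalized_lip_Gram_Q} furnish an algebraically identical majorant. Once that identification is made, no new estimates are required and the classical bound drops out of the same chain of inequalities used in the split case.
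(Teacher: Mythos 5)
Your proposal is correct and matches the paper's own argument: the first claim is exactly the paper's one-line observation that $G_{P^{-1}}^+$ and $G_Q^-$ are pointwise non-positive for global one-sided Lipschitz Gramians, and the second claim is handled in the paper only by the remark that ``the proof uses exactly the same ideas,'' which is precisely the re-run of the Lyapunov/Gr\"onwall steps you sketch. One tiny imprecision: the $P^{-1}$-side of that re-run uses the quadratic form of the \emph{sum} $x_k+\smat x_{k-1}\\0\srix$ (consistent with the $x+z$ in \eqref{generalized_lip_Gram_P}), not the difference as your phrasing suggests, though your reference to the ``dual computation'' indicates you are aware of this.
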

\begin{proof}
The functions $G_{P^{-1}}^+$ and $G_{Q}^-$ are non positive by construction of the global one-sided Lipschitz Gramians. Consequently, the result immediately follows from the one of Theorem \ref{thm_error_bound}. It is not an immediate consequence of Theorem \ref{thm_error_bound} that \eqref{generalized_lip_Gram_P}  and \eqref{generalized_lip_Gram_Q} lead to the same result. However, the proof uses exactly the same ideas. Therefore, it is omitted.
\end{proof}
\begin{remark}\label{rem_error_discussion}
\begin{itemize}
 \item We found the classical bound for reduced order systems based on balanced truncation in Corollary \ref{cor_bound} up to the exponential terms in \eqref{classical_bound}, see \cite{BT_bound_enns, BT_bound_glover} for the deterministic and \cite{bennerdammcruz} for the stochastic linear case. As mentioned before, choices of Gramians are only acceptable if $c$ is sufficiently small, i.e., the exponentials do not dominate. On the other hand, global one-sided Lipschitz Gramians might not be a optimal in terms of their spectrum, so that a weaker concept is more reasonable.
\item
 As mentioned in Section \ref{sec_lip_gram}, we can allow for small positive one-sided Lipschitz gaps $G_{Q}^-$ and $G_{P^{-1}}^+$, see \eqref{PQ_lipschitz_gap}, in certain (small) regions. If we pick $P$ and $Q$ accordingly, Theorem \ref{thm_error_bound} then tells us that the averages \begin{align*}
&\mathbb E\int_0^T G_{Q}^-\Big(V_k x_k(s), V_{k-1}x_{k-1}(s)\Big)\expn^{c(T-s)} ds\quad\text{and}\\
&\mathbb E\int_0^T  G_{P^{-1}}^+\Big(V_k x_k(s), V_{k-1}x_{k-1}(s)\Big) \expn^{c(T-s)} ds                                                                                                                                                                                                                                                                                                                                                                                                                                                                                                                                                                                  \end{align*}
 will be non positive for a large number of controls $u\in L^2_T$ and slightly positive in many of the other scenarios. This means that \eqref{classical_bound} will (approximately) hold for many controls.
\item In case we have a priori information concerning the solution space of the system, we can say even more. This is given if $P$ and $Q$ are monotonicity Gramians according to Definitions \ref{global_mon_gram} or \ref{def_av_mon_Gram}, because of \eqref{P_est} in Theorem \ref{energy_est}. This estimate provides that we obtain a small state approximation error, i.e., $x(t)\approx V_k x_k(t)$ for $k\in\{r, \dots, n-1\}$, if the truncated HSVs $\sigma_{k+1}, \dots, \sigma_n$ are of low order. In particular, we have $V_{k+1}x_{k+1}(t)\approx V_k x_k(t)$ since this is the error of just removing $\sigma_{k+1}$. Therefore, we can conclude that we need $G_{Q}^-$ and $G_{P^{-1}}^+$ to be mainly negative solely on sets of pairs $(x, z)\in\mathbb R^n\times\mathbb R^n$ with $x\approx z$. In general, monotonicity Gramians do not ensure \eqref{classical_bound}, but due to the continuity of $f$, we can say that \begin{align*}
&\mathbb E\int_0^T G_{Q}^-\Big(V_k x_k(s), V_{k-1}x_{k-1}(s)\Big)\expn^{c(T-s)} ds\approx  \mathbb E\int_0^T G_{Q}^-\Big(V_k x_k(s), V_{k}x_{k}(s)\Big)\expn^{c(T-s)} ds=0,\\
&\mathbb E\int_0^T  G_{P^{-1}}^+\Big(V_k x_k(s), V_{k-1}x_{k-1}(s)\Big) \expn^{c(T-s)} ds  \approx \mathbb E\int_0^T \underbrace{G_{P^{-1}}^+\Big(V_k x_k(s), V_{k}x_{k}(s)\Big)}_{= \,4 G_{P^{-1}}\big(V_{k}x_{k}(s)\big)} \expn^{c(T-s)} ds.                                                                                                                                                                                                                                                                                                                                                                                                                                                                                                                                                                                 \end{align*}
Now, the monotonicity gap $G_{P^{-1}}$ defined in \eqref{loc_max_functions} is non positive on average for $u\in \mathcal U$ by construction of the average monotonicity Gramian $P$. This ensures that the bound of Corollary \ref{cor_bound} might still deliver a reasonable error criterion although it does not hold.
 \end{itemize}
\end{remark}
\begin{proof}[Proof of Theorem \ref{thm_error_bound}]
We introduce $x_-(t):= x_k(t) -\smat x_{k-1}(t)\\0\srix $ and $x_+(t):= x_k(t) +\smat x_{k-1}(t)\\0\srix $, for which the dynamics are obtained by subtracting/adding \eqref{bal_par} and \eqref{bal_par_minus1}, i.e., \begin{align} \label{eq_xminus}
 dx_-(t) &= [A_{k} x_-(t)+\smat 0 \\ v_0(t)\srix+ f_k(x_k(t))-f_k\big(\small{\smat x_{k-1}(t)\\0\srix} \big)]dt + \sum_{i=1}^{d}\big[ N_{k, i} x_-(t-) + \smat 0 \\ v_i(t-)\srix\big]dM_i(t),\\ \label{eq_xplus}
 dx_+(t) &= [A_{k} x_+(t)+2B_k u(t)-\smat 0 \\ v_0(t)\srix+ f_k(x_k(t))+f_k\big(\small{\smat x_{k-1}(t)\\0\srix} \big)]dt + \sum_{i=1}^{d}\big[ N_{k, i} x_+(t-) - \smat 0 \\ v_i(t-)\srix\big]dM_i(t).
\end{align} 
Recalling that $\Sigma_k=\diag(\sigma_1, \dots, \sigma_k)$ denotes the diagonal matrix of the $k$ largest HSVs of the original system, we know, by Proposition \ref{prop_bal}, that $\Sigma_n$ satisfies \eqref{newgram2} and \eqref{newgram2Q} with the balanced realization \eqref{coef_trans}. Evaluating the left upper $k\times k$ block of the equations associated to $\Sigma_n$, we obtain
\begin{align}
\label{aux_reach_gram}
(A_{k}+c_1 I)^\top \Sigma_k^{-1} + \Sigma_k^{-1} (A_{k}+c_1 I) 
+ \sum_{i, j=1}^d N_{k, i}^\top \Sigma_k^{-1}   N_{k, j} k_{ij} 
&\leq -\Sigma_k^{-1}  B_k B_k^\top \Sigma_k^{-1},\\
\label{aux_obs_gram}
(A_{k}+c_1 I)^\top \Sigma_k + \Sigma_k (A_{k}+c_1 I) + \sum_{i, j=1}^d  N_{k, i}^\top \Sigma_k N_{k, j} k_{ij}&\leq - C_{k}^\top C_{k}.
\end{align}
Taking \eqref{eq_xminus} into account, Lemma \ref{lemstochdiff} is applied to $\Sigma_k^{\frac{1}{2}}x_-(t)$ to obtain \begin{align*}
 \frac{d}{dt}\mathbb E\left[x_-(t)^\top \Sigma_k x_-(t)\right]= & 2 \mathbb E\left[x_-(t)^\top \Sigma_k [A_k x_-(t)+\smat 0 \\ v_0(t)\srix+ f_k(x_k(t))-f_k\big(\small{\smat x_{k-1}(t)\\0\srix}\big)]\right] \\
 &+ \sum_{i, j=1}^d \mathbb E\left[\big( N_{k, i} x_-(t) + \smat 0 \\ v_i(t)\srix\big)^\top \Sigma_k \big( N_{k, j} x_-(t) + \smat 0 \\ v_j(t)\srix\big)\right]k_{ij}.
 \end{align*}
Integrating this equation over $[0, t]$ with $t\leq T$ yields
\begin{align*}
\mathbb E\left[x_-(t)^\top \Sigma_k x_-(t)\right]&=
\mathbb E\int_0^t x_-(s)^\top\Big(A_k^\top \Sigma_k+ \Sigma_k A_k + \sum_{i, j=1}^d  N_{k, i}^\top \Sigma_k N_{k, j} k_{ij}\Big) x_-(s) ds\\
&\quad+ 2\mathbb E\int_0^t x_-(s)^\top \Sigma_k \big[ f_k(x_k(s))-f_k\big(\small{\smat x_{k-1}(s)\\0\srix}\big)\big] ds+R_-(t), \end{align*}
where $R_-(t)=\mathbb E\int_0^t 2 x_-(s)^\top \Sigma_k \smat 0 \\ v_0(s)\srix + \sum_{i, j=1}^d \left(2 N_{k, i} x_-(s) + \smat 0 \\ v_i(s)\srix\right)^\top \Sigma_k \smat 0 \\ v_j(s)\srix k_{ij}  ds$. Let $x_{k, 2}$ be the last entry of $x_k$ and hence also of $x_-$. Moreover, $n_{k, i}$ shall denote the last line of $N_{k, i}$.
Therefore, we obtain that $x_-(s)^\top \Sigma_k \smat 0 \\ v_0(s)\srix = \sigma_k x_{k, 2}(s) v_0(s)$ and $\left(2 N_{k, i} x_-(s) + \smat 0 \\ v_i(s)\srix\right)^\top \Sigma_k \smat 0 \\ v_j(s)\srix k_{ij}   = \sigma_k \left(2 n_{k, i} x_-(s) + v_i(s)\right) v_j(s)k_{ij}$. By construction of $v_i$ in \eqref{bal_par_minus1}, we have
$-2n_{k, i}  \smat x_{k-1}(s)\\0\srix+2v_i(s)=0$, so that $\sigma_k \left(2 n_{k, i} x_-(s) + v_i(s)\right) v_j(s)k_{ij}=\sigma_k \left(2 n_{k, i} x_k(s) - v_i(s)\right) v_j(s)k_{ij}$.  Therefore, it holds that \begin{align*}
 R_-(t)\leq \sigma_k\mathbb E\int_0^t 2 x_{k, 2}(s) v_0(s) + \sum_{i, j=1}^d \left(2 n_{k, i} x_k(s) + v_i(s)\right) v_j(s)k_{ij} ds                                                                                                                                                                                                                                                                     \end{align*}
exploiting that $\sum_{i, j=1}^d v_i(s)v_j(s) k_{ij} \geq 0$, because $K=(k_{ij})$ is positive semidefinite. Hence, \begin{align*}
\mathbb E\left[x_-(t)^\top \Sigma_k x_-(t)\right]&\leq
\mathbb E\int_0^t x_-(s)^\top\Big((A_k+c_1 I)^\top \Sigma_k+ \Sigma_k (A_k+c_1 I) + \sum_{i, j=1}^d  N_{k, i}^\top \Sigma_k N_{k, j} k_{ij}\Big) x_-(s)ds\\
&\quad+ 2\mathbb E\int_0^t x_-(s)^\top \Sigma_k \big[ f_k(x_k(s))-f_k\big(\small{\smat x_{k-1}(s)\\0\srix}\big)-c_2 x_-(s)\big] ds\\
&\quad+\sigma_k\mathbb E\int_0^t 2 x_{k, 2}(s) v_0(s) + \sum_{i, j=1}^d \left(2 n_{k, i} x_k(s) + v_i(s)\right) v_j(s)k_{ij} ds\\
&\quad+  c\int_0^t \mathbb E\left[x_-(s)^\top \Sigma_k x_-(s) \right] ds. \end{align*}
We set $\mathcal T_{k, -}(t):=2\mathbb E\int_0^t x_-(s)^\top \Sigma_k \big[ f_k(x_k(s))-f_k\big(\small{\smat x_{k-1}(s)\\0\srix}\big)-c_2 x_-(s)\big]ds$ and $\alpha_k(t):=\mathbb E\int_0^t 2 x_{k, 2}(s) v_0(s) + \sum_{i, j=1}^d \left(2 n_{k, i} x_k(s) + v_i(s)\right) v_j(s)k_{ij} ds$. Based on \eqref{aux_obs_gram} combined with the definitions of the outputs in \eqref{bal_par} and \eqref{bal_par_minus1}, we have \begin{align*}
\mathbb E\left[x_-(t)^\top \Sigma_k x_-(t)\right]\leq
- \left\|y_k-y_{k-1}\right\|_{L^2_t}^2+\mathcal T_{k, -}(t) +\sigma_k\alpha_k(t)+  c\int_0^t \mathbb E\left[x_-(s)^\top \Sigma_k x_-(s) \right] ds. \end{align*}
We obtain by \eqref{gronwall2} that \begin{align}\label{inter_result}
 \mathbb E \int_0^t  \left\|y_k(s)-y_{k-1}(s)\right\|_{2}^2 \expn^{c(t-s)} ds\leq   \int_0^t  \big(\dot{\mathcal T}_{k, -}(s) +\sigma_k\dot \alpha_k(s) \big)\expn^{c(t-s)} ds.                                
    \end{align}
Now, exploiting  Lemma \ref{lemstochdiff} for the process $\Sigma_k^{-\frac{1}{2}}x_+(t)$ together with \eqref{eq_xplus} yields
\begin{align*}
\mathbb E\left[x_+(t)^\top \Sigma_k^{-1} x_+(t)\right]&=
\mathbb E\int_0^t x_+(s)^\top\Big(A_k^\top \Sigma_k^{-1}+ \Sigma_k^{-1} A_k + \sum_{i, j=1}^d  N_{k, i}^\top \Sigma_k^{-1} N_{k, j} k_{ij}\Big) x_+(s)ds\\
&\quad+ 2\mathbb E\int_0^t x_+(s)^\top \Sigma_k^{-1} \big[ f_k(x_k(s))+f_k\big(\small{\smat x_{k-1}(s)\\0\srix}\big)\big]ds\\
&\quad +\mathbb E  \int_0^t 4 x_+(s)^\top \Sigma_k^{-1} B_k u(s) ds-R_+(t), \end{align*}
where $R_+(t)=\mathbb E\int_0^t 2 x_+(s)^\top \Sigma_k^{-1} \smat 0 \\ v_0(s)\srix + \sum_{i, j=1}^d \left(2 N_{k, i} x_+(s) - \smat 0 \\ v_i(s)\srix\right)^\top \Sigma_k^{-1} \smat 0 \\ v_j(s)\srix k_{ij}  ds$. We observe that $x_+(s)^\top \Sigma_k^{-1} \smat 0 \\ v_0(s)\srix = \sigma_k^{-1} x_{k, 2} v_0(s)$ and $\left(2 N_{k, i} x_+(s) - \smat 0 \\ v_i(s)\srix\right)^\top \Sigma_k^{-1} \smat 0 \\ v_j(s)\srix k_{ij} = \sigma_k^{-1}(2 n_{k, i} x_+(s) -v_i(s)) v_j(s) k_{ij}= \sigma_k^{-1}(2 n_{k, i} x_k(s) +v_i(s)) v_j(s) k_{ij}$ telling us that $R_+(t)= \sigma_k^{-1} \alpha_k(t)$. Defining $\mathcal T_{k, +}(t):=2\mathbb E\int_0^t x_+(s)^\top \Sigma_k^{-1} \big[ f_k(x_k(s))+f_k\big(\small{\smat x_{k-1}(s)\\0\srix}\big)-c_2 x_+(s)\big]ds$ results in
\begin{align*}
\mathbb E\left[x_+(t)^\top \Sigma_k^{-1} x_+(t)\right]&=
\mathbb E\int_0^t x_+(s)^\top\Big((A_k+c_1 I)^\top \Sigma_k^{-1}+ \Sigma_k^{-1} (A_k+c_1 I) + \sum_{i, j=1}^d  N_{k, i}^\top \Sigma_k^{-1} N_{k, j} k_{ij}\Big) x_+(s)ds\\
&\quad+\mathcal T_{k, +}(t)
+\mathbb E  \int_0^t 4 x_+(s)^\top \Sigma_k^{-1} B_k u(s) ds-\sigma_k^{-1} \alpha_k(t)\\
&\quad+c\int_0^t \mathbb E\left[x_+(s)^\top \Sigma_k^{-1} x_+(s) \right] ds. \end{align*}
We exploit the estimate 
\begin{align*}
4\left\| u(s)\right\|_2^2 &\geq \left\|2 u(s)\right\|_2^2 -\left\|B_k^\top \Sigma_k^{-1} x_+(s)- 2u(s)\right\|_2^2 \\ \nonumber
&=-x_+(s)^\top \Sigma_k^{-1} B_kB_k^\top \Sigma_k^{-1} x_+(s)+4 x_+(s)^\top\Sigma_k^{-1} B_k u(s)
                \end{align*}
and insert \eqref{aux_reach_gram} in order to find
\begin{align*}
\mathbb E\left[x_+(t)^\top \Sigma_k^{-1} x_+(t)\right]&\leq 4\left\| u\right\|_{L^2_t}^2
+\mathcal T_{k, +}(t)
-\sigma_k^{-1} \alpha_k(t)
+c\int_0^t \mathbb E\left[x_+(s)^\top \Sigma_k^{-1} x_+(s) \right] ds. \end{align*}
We apply \eqref{gronwall2} providing \begin{align*}
  \int_0^t  \dot \alpha_k(s) \expn^{c(t-s)} ds\leq   \sigma_k\int_0^t  \big(\dot{\mathcal T}_{k, +}(s) +4\mathbb E\left\| u(s)\right\|_{2}^2 \big)\expn^{c(t-s)} ds.
  \end{align*}
Combining this with \eqref{inter_result} leads to \begin{align*}
 \mathbb E \int_0^t  \left\|y_k(s)-y_{k-1}(s)\right\|_{2}^2 \expn^{c(t-s)} ds\leq   \int_0^t\left[\dot{\mathcal T}_{k, -}(s)+  \sigma_k^2\big(\dot{\mathcal T}_{k, +}(s) +4\mathbb E\left\| u(s)\right\|_{2}^2 \big)\right]\expn^{c(t-s)} ds.                                
                                    \end{align*}
The last step is to find different representations for $\mathcal T_{k, -}$ and $\mathcal T_{k, +}$ inserting the definitions of $x_+$ and $x_-$.
We recall that $f_k(x_k):=\tilde f_k(\smat x_k\\ 0_{n-k}\srix)$, $x_k\in \mathbb R^k$ and $0_{n-k}\in \mathbb R^{n-k}$ by \eqref{part_bal}. Since $\tilde f_k$ are the first $k$ entries of the balanced nonlinearity $f_n$, we have \begin{align*}                                                                                                                                                                                                                            &\big(x_k(s)\pm \small{\smat x_{k-1}(s)\\0\srix}\big)^\top D_k \big[ f_k(x_k(s))\pm f_k\big(\small{\smat x_{k-1}(s)\\0\srix}\big)-c_2 \big(x_k(s)\pm \small{\smat x_{k-1}(s)\\0\srix}\big) \big]\\
&= \big(\small{\smat x_{k}(s)\\0_{n-k}\srix}\pm \small{\smat x_{k-1}(s)\\0_{n-k+1}\srix}\big)^\top D_n \big[ f_n(\small{\smat x_{k}(s)\\0_{n-k}\srix})\pm f_n\big(\small{\smat x_{k-1}(s)\\0_{n-k+1}\srix}\big)-c_2 \big(\small{\smat x_{k}(s)\\0_{n-k}\srix}\pm 
\small{\smat x_{k-1}(s)\\0_{n-k+1}\srix} \big) \big],
\end{align*}
 where $D_k\in \{\Sigma_k, \Sigma_k^{-1}\}$. By Proposition \ref{prop_bal} and \eqref{coef_trans}, we know that $\Sigma_n=S^{-\top}QS^{-1}$, $\Sigma_n^{-1}=S^{-\top}P^{-1}S^{-1}$ and $f_n = S f(S^{-1}\cdot)$. Moreover, $S^{-1} \small{\smat x_{k}(s)\\0_{n-k}\srix} = V_k x_k(s)$, since $V_k$ are the first $k$ columns of the inverse $S^{-1}$ of the balancing transformation. Hence,  
 \begin{align*}
  \mathcal T_{k, -}(t)=2 \mathbb E \int_0^t G_{Q}^-\Big(V_k x_k(s), V_{k-1}x_{k-1}(s)\Big) ds,\quad
 \mathcal T_{k, +}(t)=2 \mathbb E \int_0^t G_{P^{-1}}^+\Big(V_k x_k(s), V_{k-1}x_{k-1}(s)\Big) ds 
 \end{align*}
according to the definition of the one-sided Lipschitz gaps in \eqref{PQ_lipschitz_gap}. This concludes the proof using \eqref{remove_HSVs_individually} and setting $t=T$.
\end{proof}  

\section{Numerical experiments}\label{sec_sim}

Below, let $L>0$ defining a ``step size'' parameter $h:=\frac{L}{(n+1)}$. Based on this, we introduce a grid by $\zeta_j = j h$ for $j=0, 1, \dots, n+1$. Now, we mainly focus on an example for \eqref{original_system} that is given by \begin{equation}    \label{finite_difference_model}                                                                                                                                                                                      
\begin{aligned}
dx_1(t) &= \Big[\frac{x_2(t)-2x_1(t)}{h^2} + \frac{u_1(t)}{h^2} +\mathfrak{f}(x_1(t))\Big]dt + \sum_{i=1}^d \mathfrak g_i(\zeta_1) x_1(t-) dM_i(t),\\
dx_j(t) &= \Big[\frac{x_{j+1}(t)-2x_j(t)+x_{j-1}(t)}{h^2} +\mathfrak{f}(x_j(t))\Big]dt + \sum_{i=1}^d \mathfrak g_i(\zeta_j) x_j(t-) dM_i(t), \\
dx_n(t) &= \Big[\frac{-2x_n(t)+x_{n-1}(t)}{h^2} + \frac{u_2(t)}{h^2} +\mathfrak{f}(x_n(t))\Big]dt + \sum_{i=1}^d \mathfrak g_i(\zeta_n) x_n(t-) dM_i(t)                                                                                           \end{aligned}    
\end{equation}
for $j\in\{2, \dots, n-1\}$. We have that $u=\smat u_1\\u_2\srix$ ($m=2$) and $f(x) = \smat {\mathfrak{f}(x_1)} &\dots & {\mathfrak{f}(x_n)} \srix^\top$, where $\mathfrak{f}$ and $\mathfrak{g}_i$ are scalar functions. Formally, \eqref{finite_difference_model} can interpreted as a finite difference discretization of the stochastic reaction diffusion equation 
\begin{equation}\label{stoch_reaction_diff}
\begin{aligned}
 dv_t(\zeta) =\Big[ \frac{\partial^2}{\partial \zeta^2} v_t(\zeta) &+ \mathfrak{f}\big(v_t(\zeta)\big)\Big]  + \sum_{i=1}^d \mathfrak g_i(\zeta) v_{t-}(\zeta) dM_i(t),\quad \zeta\in (0, L), \quad t\in (0, T), \\
 v_0(\zeta)&\equiv 0, \quad
  v_t(0) = u_1(t)\quad\text{and}\quad  v_t(L) = u_2(t),                                                                                                                                 \end{aligned}
\end{equation}
with controlled boundaries and the intuition that $x_j(t)\approx v_t(\zeta_j)$. Let us specify the other parameter and the noise profile. Below, $M$ is a Wiener process                                                                                            in dimension $d=2$ with                                                                                     covariance $K=\smat 1 & -0.5\\-0.5 & 1\srix$ and $n=100$. We study the nonlinearities $\mathfrak f(v) =  (1+a) v^{2} - v^{3}-a v$ with $a=0.1$ and $\mathfrak f(v) = v - v^{3}$, so that $f= f^{(1)}$ or $f= f^{(2)}$ introduced in Example \ref{example1}. The particular noise scaling functions are $\mathfrak g_1(\zeta)= 4 \sin(\zeta)$ and $\mathfrak g_2(\zeta)= 4 \cos(\zeta)$. Moreover, the terminal time is $T=1$ and the quantity of interest shall be the following average: \begin{align}\label{rec_dif_output}
  y(t) = \frac{1}{n} \sum_{j=1}^n x_j(t).                                                                                                                                                                                                                                                                                                                                                                                                                                                                                                                                                                                                                                                                                                                                                                                                                                                                                                                                                                                                                                                                                 \end{align}
For illustration we show two typical paths of \eqref{rec_dif_output} for $f=f^{(1)}, f^{(2)}$ and two different inputs in Figures \ref{output1} and \ref{output2}.
 \begin{figure}[ht]
 \begin{minipage}{0.45\linewidth}
  \hspace{-0.5cm}
 \includegraphics[width=1.0\textwidth,height=6cm]{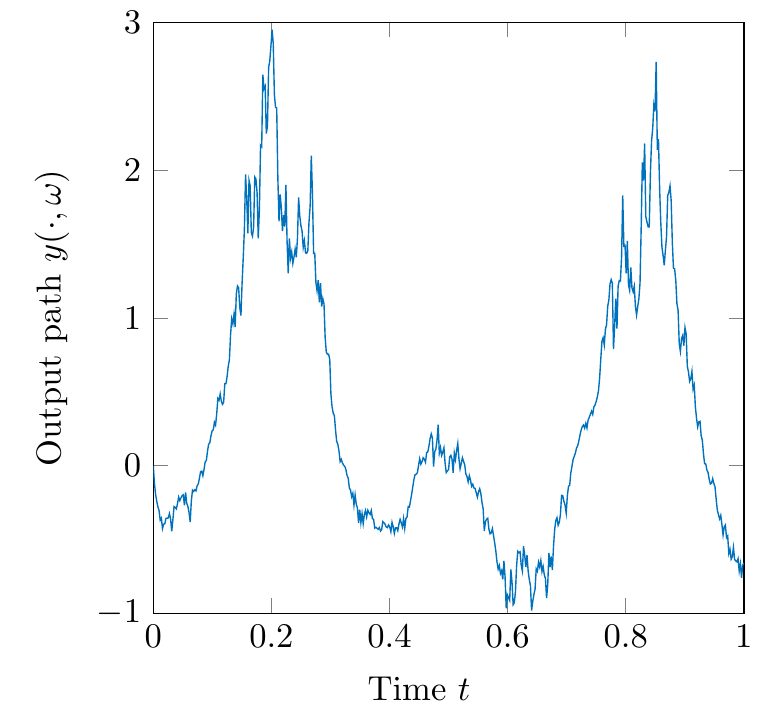}
 \caption{Path of \eqref{rec_dif_output} with $f=f^{(1)}$ and $u=\tilde u$ in \eqref{part_controls}.}\label{output1}
 \end{minipage}\hspace{0.5cm}
 \begin{minipage}{0.45\linewidth}
  \hspace{-0.5cm}
 \includegraphics[width=1.0\textwidth,height=6cm]{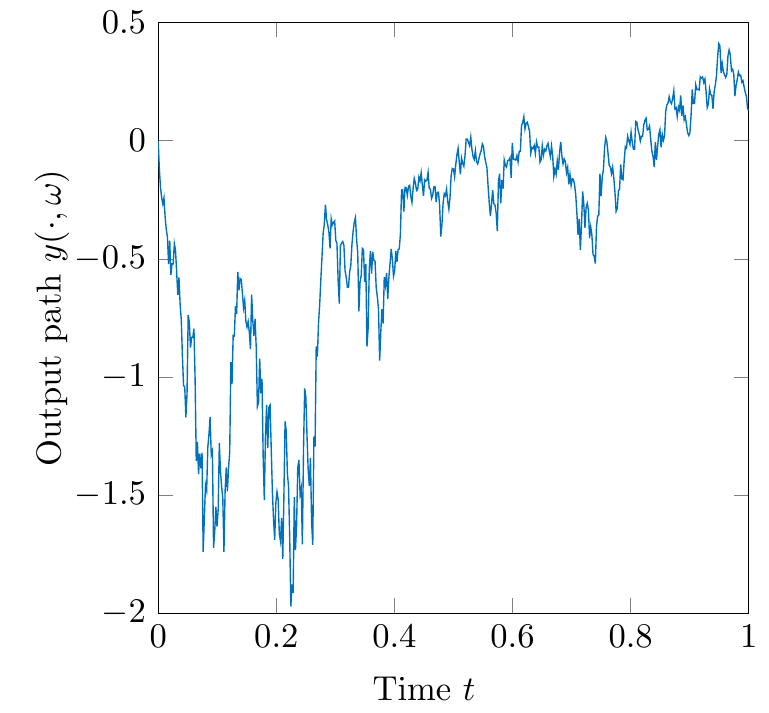}
 \caption{Path of \eqref{rec_dif_output} with $f=f^{(2)}$ and $u=\hat u$ in \eqref{part_controls}.}\label{output2}
 \end{minipage}
 \end{figure}

For $f=f^{(2)}$, we know that \eqref{Xmonoton} holds with $X=I$ and $c_2\geq c_f=1$. Further, we observe that \eqref{shifted_Lyap} is true for $X=I$ and $c_1=c_f=1$. Therefore, the system is globally mean square asymptotically stable according to Theorem \ref{thm_global_stab} and the concept of monotonicity Gramians with $c_1=c_2=1$ is well-defined by Proposition \ref{propzugram2}. We can even guarantee the existence of a one-sided Lipschitz Gramian by Proposition \ref{lip_gram_exist} since the one-sided Lipschitz condition \eqref{one_sided_lip_cond} holds with $c_f=1$ using Example \ref{example5}. The choice of $f=f^{(2)}$ also yields a mean square asymptotically stable system since \eqref{shifted_Lyap} particularly holds for $X=I$ if $c_1=c_f=\frac{(a-1)^2}{4}=0.20250$ is used and since we know, by Example \ref{example1}, that \eqref{Xmonoton} is true setting $X=I$ and $c_2\geq c_f$. Therefore, monotonicity Gramians also exist here for $c_1=c_2=0.20250$. On the other hand, a one-sided Lipschitz Gramian $Q$ exists with $c_1=c_2=\frac{a^2-a+1}{3}=0.30\bar3$ due to Proposition \ref{lip_gram_exist} ($X=I$) exploiting Example \ref{example4}. The same example, however, indicates that $P$ might not be available as a one-sided Lipschitz Gramian.\smallskip

The goal of this section is to construct  average monotonicity Gramians $P$ and $Q$ according to Definition \ref{def_av_mon_Gram} for a large set of controls $\mathcal U$. In detail, we choose the monotonicity/one-sided Lipschitz constant to define $c_1=c_2=1$ for $f=f^{(2)}$ and we set $c_1=c_2=0.30\bar3$ for $f=f^{(1)}$ which is a number dominating the monotonicity constant $0.20250$. Consequently, Theorems \ref{energy_est} and \ref{thm_error_bound} hold for $c=0$. We choose $Q$ to be the solution to the equality in \eqref{newgram2Q} and $P$ the candidate with minimal trace satisfying \eqref{newgram2}. We refer to Section \ref{Sec_comp_gram} for the particular computation strategy. We observe that these $P$ and $Q$ do not satisfy \eqref{PQ_monoton} for all $x\in \mathbb R^n$ but for the essential ones. In fact, we run experiments for a large variety of controls involving increasing, decreasing and (highly) oscillating $u$ as well as a combination of all of them. In all cases, conditions \eqref{P_average_monoton} and \eqref{Q_average_monoton} were fulfilled indicating that these $P$ and $Q$ are average monotonicity Gramians for a large set of controls $\mathcal U\subset L^2_T$. We present the experiments solely for two representatives $\tilde u, \hat u\in \mathcal U$ which are given by \begin{align}\label{part_controls}
\tilde u(t)=\smat -3\cos(20 t)\\ 2\sin(10 t) \srix\quad \text{and}\quad \hat u(t)=\smat -3\expn^{-t}\\ 2\sqrt{t} \srix.\end{align}
These are chosen since they also steer the state $x(t)$ to regions of $\mathbb R^n$, where the monotonicity conditions in \eqref{PQ_monoton} are violated. The constructed monotonicity Gramians have the advantage that the HSVs provide a reliable criterion for the reduction error according to Theorem \ref{energy_est}.  Here, we have $c=0$. We depict these algebraic values for $f=f^{(1)}$ in Figure \ref{hsvplot} and observe a strong decay telling us that we can expect a low approximation error for small $r$. The HSVs for $f=f^{(2)}$ behave very similarly and are therefore omitted. 
\begin{figure}[ht]
\center
\includegraphics[width=7.5cm,height=6.5cm]{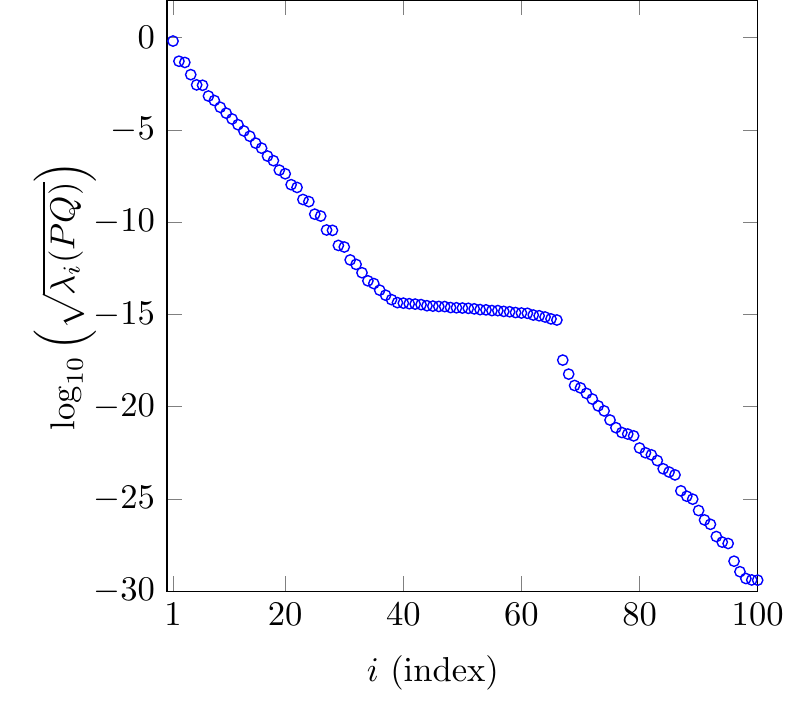}
\caption{Logarithmic HSVs based on monotonicity Gramians for $f=f^{(1)}$ with $c_1=c_2=0.30\bar3$, where $Q$ satisfies the equality in \eqref{newgram2Q} and $P$ is the minimal trace solution of \eqref{newgram2}.}\label{hsvplot}
\end{figure}
As discussed in Remark \ref{rem_error_discussion}, we cannot expect the bound in Corollary \ref{cor_bound} (with $c=0$) to hold if average monotonicity Gramians are used. However, we expect the error to not be far from this bound, since the one-sided Lipschitz gaps $G_{P^{-1}}^+$ and $G_{Q}^-$ in Theorem \ref{thm_error_bound} are expected to be small when they are positive. We compute the output $y_r$ of the reduced order model \eqref{red_system} introduced in Section \ref{sec_BT} for different reduced dimensions $r=3, 6, 10, 20$. The relative output error for $f=f^{(1)}$ can be found in Table \ref{table1} for the controls $\tilde u$ and $\hat u$. We observe a decreasing behaviour for growing $r$ yielding a very high accuracy for $r\geq 6$. Table \ref{table2} shows the bound of Corollary \ref{cor_bound} which generally is no upper bound for the error calculated in Table \ref{table1}, see the case of $r=20$. This is because the one-sided Lipschitz gaps are not always non positive. However, $2\sum_{k=r+1}^n \sigma_k$ is close to the actual error. This is an observation made also in additional simulations that are not presented here. The intuition for $2\sum_{k=r+1}^n \sigma_k$ being an upper bound for dimensions $r=3, 6, 10$ but not for $r=20$ might be the low order of a positive one-sided Lipschitz gap. For that reason, it becomes only visible when $2\sum_{k=r+1}^n \sigma_k$ is very small. 
\begin{table}
\begin{minipage}{.499\linewidth}
\centering\begin{tabular}{|c|c|c|}\hline
& \multicolumn{2}{c}{$\left\|y- y_r\right\|_{L^2_T} /\left\|y\right\|_{L^2_T}$ for $f=f^{(1)}$} \vline \\
\hline 
$r$  & $u=\tilde u$ & $u=\hat u$\\
\hline
\hline
$3$ & $4.4077$e$-02$& $3.8041$e$-02$ \\   
$6$ &$4.0903$e$-03$  &$3.7334$e$-03$ \\
$10$ &$3.1233$e$-04$  & $2.5745$e$-04$ \\
$20$ &$2.7327$e$-07$ & $3.5013$e$-07$\\
\hline
\end{tabular}\caption{Relative output error dimension \\reduction with controls in \eqref{part_controls} and $f=f^{(1)}$.}
\label{table1}
\end{minipage}
\begin{minipage}{.499\linewidth}
\centering\begin{tabular}{|c|c|c|}\hline
& \multicolumn{2}{c}{$2\sum_{k=r+1}^n \sigma_k \left\|u\right\|_{L^2_T} /\left\|y\right\|_{L^2_T}$ for $f=f^{(1)}$} \vline \\
\hline 
$r$  & $u=\tilde u$ & $u=\hat u$\\
\hline
\hline
$3$ &\textcolor{white}{aa} $1.0240$e$-01$\textcolor{white}{aa}& $1.8031$e$-01$ \\   
$6$ &\textcolor{white}{aa} $8.6029$e$-03$ \textcolor{white}{aa}  &$1.5112$e$-02$ \\
$10$ &\textcolor{white}{aa} $4.6198$e$-04$ \textcolor{white}{aa}  & $8.1347$e$-04$ \\
$20$ &\textcolor{white}{aa} $1.3487$e$-07$ \textcolor{white}{aa} & $2.3709$e$-07$\\
\hline
\end{tabular}\caption{Relative error criterion of Corollary \ref{cor_bound} with $c=0$ and $f=f^{(1)}$.}
\label{table2}
\end{minipage}
\end{table}
We repeat the error calculations for $f=f^{(2)}$ and obtain basically the same results, see Tables \ref{table3} and \ref{table4}. This is due to a similar path behaviour of $y$ for both nonlinearities $f^{(1)}$ and $f^{(2)}$.
\begin{table}
\begin{minipage}{.499\linewidth}
\centering\begin{tabular}{|c|c|c|}\hline
& \multicolumn{2}{c}{$\left\|y- y_r\right\|_{L^2_T} /\left\|y\right\|_{L^2_T}$ for $f=f^{(2)}$} \vline \\
\hline 
$r$  & $u=\tilde u$ & $u=\hat u$\\
\hline
\hline
$3$ & $4.3380$e$-02$& $3.5840$e$-02$ \\   
$6$ &$3.7409$e$-03$  &$2.9983$e$-03$ \\
$10$ &$3.1507$e$-04$  & $2.3924$e$-04$ \\
$20$ &$1.8514$e$-07$ & $3.8720$e$-07$\\
\hline
\end{tabular}\caption{Relative output error dimension \\reduction with controls in \eqref{part_controls} and $f=f^{(2)}$.}
\label{table3}
\end{minipage}
\begin{minipage}{.499\linewidth}
\centering\begin{tabular}{|c|c|c|}\hline
& \multicolumn{2}{c}{$2\sum_{k=r+1}^n \sigma_k \left\|u\right\|_{L^2_T} /\left\|y\right\|_{L^2_T}$ for $f=f^{(2)}$} \vline \\
\hline 
$r$  & $u=\tilde u$ & $u=\hat u$\\
\hline
\hline
$3$ & \textcolor{white}{aa}$1.0494$e$-01$\textcolor{white}{aa}& $1.6369$e$-01$ \\   
$6$ &\textcolor{white}{aa}$7.2186$e$-03$  \textcolor{white}{aa}&$1.3624$e$-02$ \\
$10$ &\textcolor{white}{aa}$4.7378$e$-04$\textcolor{white}{aa}  & $7.3326$e$-04$ \\
$20$ &\textcolor{white}{aa}$1.3493$e$-07$\textcolor{white}{aa} & $2.1019$e$-07$\\
\hline
\end{tabular}\caption{Relative error criterion of Corollary \ref{cor_bound} with $c=0$ and $f=f^{(2)}$.}
\label{table4}
\end{minipage}
\end{table}
Let us finally mention that  we conducted the same experiments also when the right Dirichlet boundary condition in \eqref{stoch_reaction_diff} is replaced by the
 Neumann condition  $\frac{\partial}{\partial \zeta} v_t(\zeta)\vert_{\zeta=L} = u_2(t)$     leading to \begin{align*}                                                                                                        
dx_n(t) = \Big[\frac{-x_n(t)+x_{n-1}(t)}{h^2} + \frac{u_2(t)}{h} +\mathfrak{f}(x_n(t))\Big]dt + \sum_{i=1}^d \mathfrak g_i(\zeta_n) x_n(t) dM_i(t)                                                                                                    \end{align*}
 instead of the last line in \eqref{finite_difference_model}. Here, analog results can be seen using the same kind of parameters.

\appendix
\section{Supporting lemmas}
This section contains several useful auxiliary results.
\begin{lem}\label{lemstochdiff}
Suppose that $a, b_1, \ldots, b_d$ are $\mathbb R^n$-valued processes with $a$ being $(\mathcal F_t)$-adapted and almost surely Lebesgue integrable and $b_i$ 
being integrable w.r.t the mean zero square integrable L\'evy process $M=\begin{bmatrix} M_1& \ldots & M_d\end{bmatrix}^\top$ with covariance matrix $K=(k_{ij})$. If $x$ is represented by \begin{align*}
 dx(t)=a(t) dt+ b(t)dM=a(t) dt+ \sum_{i=1}^d b_i(t)dM_i,                                                                                                                                  \end{align*}
where $b=\begin{bmatrix} b_1& \ldots & b_d\end{bmatrix}$. Then, we have \begin{align*}
 \frac{d}{dt}\mathbb E\left[x(t)^\top x(t)\right]=2 \mathbb E\left[x(t)^\top a(t)\right]+ \mathbb E   \left\|b(t)K^{\frac{1}{2}} \right\|_F^2 =2 \mathbb E\left[x(t)^\top a(t)\right] + \sum_{i, j=1}^d \mathbb E\left[b_i(t)^\top b_j(t)\right]k_{ij}.                                                                                                                              \end{align*}
\end{lem}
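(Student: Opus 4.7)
The plan is to apply It\^o's formula for semimartingales to the function $\phi(x) = x^\top x$ along the L\'evy-driven process $x(t)$, and then take expectations so that the stochastic integral vanishes and the quadratic-variation piece can be evaluated via the covariance matrix of $M$.

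First I would use the product rule componentwise: for each coordinate $k$, the semimartingale identity $d(x_k(t)^2) = 2\, x_k(t-)\,dx_k(t) + d[x_k,x_k](t)$ holds. Summing over $k$, and noting that the finite-variation drift $a\,dt$ contributes nothing to $[x,x]$, gives
\[
d\bigl(x(t)^\top x(t)\bigr) = 2\, x(t-)^\top a(t)\,dt \;+\; 2\, x(t-)^\top b(t)\,dM(t) \;+\; \sum_{i,j=1}^d b_i(t)^\top b_j(t)\, d[M_i,M_j](t).
\]
Integrating over $[0,t]$ splits the increment of $x^\top x$ into a drift piece, a stochastic integral against $M$, and a quadratic-variation piece involving the matrix $[M,M]$.

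Next I would take expectations. The integral $\int_0^t x(s-)^\top b(s)\,dM(s)$ is a priori a local martingale; under the integrability hypotheses on $b$ (and, if needed, a standard localization via a sequence of stopping times $\tau_n \uparrow \infty$ together with dominated/monotone convergence) it becomes a true martingale with zero expectation. For the quadratic-variation term, the key input is that $M$ is a square-integrable L\'evy process with $\mathbb E[M(t)M(t)^\top] = K t$; as recalled in the paper via \cite{zabczyk}, this forces the predictable compensator of $[M_i,M_j]$ to be $k_{ij}\,t$, i.e.\ $[M_i,M_j](t) - k_{ij}\,t$ is a martingale. Hence for the predictable integrands $b_i^\top b_j$ one has
\[
\mathbb E \int_0^t b_i(s)^\top b_j(s)\, d[M_i,M_j](s) \;=\; k_{ij}\, \mathbb E \int_0^t b_i(s)^\top b_j(s)\, ds.
\]

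Combining these, and using that $x(s-) = x(s)$ a.e.\ in $s$ under Lebesgue measure, yields
\[
\mathbb E\bigl[x(t)^\top x(t)\bigr] - \mathbb E\bigl[x(0)^\top x(0)\bigr] \;=\; \int_0^t 2\, \mathbb E\bigl[x(s)^\top a(s)\bigr]\,ds \;+\; \sum_{i,j=1}^d k_{ij} \int_0^t \mathbb E\bigl[b_i(s)^\top b_j(s)\bigr]\,ds,
\]
and differentiating in $t$ gives the second equality in the statement. The first equality then reduces to the algebraic identity $\|b K^{1/2}\|_F^2 = \trace(b K b^\top) = \sum_{i,j} b_i^\top b_j\, k_{ij}$, which is an immediate computation. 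The main technical obstacle is the rigorous passage from local to genuine martingale for the $M$-integral; this is where the integrability hypotheses on the $b_i$ are used, possibly through a localization-and-truncation argument, but the remainder of the proof is a direct assembly of standard semimartingale calculus with the covariance identity for $M$.
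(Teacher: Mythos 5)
Your argument is sound. Note, however, that the paper does not actually prove this lemma internally: its ``proof'' is a citation to Lemma 5.2 of \cite{redmannspa2}, so there is no in-paper argument to compare against --- the route you take is the standard one and is, in substance, what that reference carries out. The two places that genuinely need care are exactly the ones you flag: (i) the stochastic integral $\int_0^t x(s-)^\top b(s)\,dM(s)$ is a priori only a local martingale, and upgrading it to a true martingale with vanishing expectation requires the localization/integrability step you describe; (ii) the identification of the predictable compensator of $[M_i,M_j]$ with $k_{ij}\,t$ is precisely where the square-integrable, mean-zero, stationary-independent-increment structure of $M$ enters through $\mathbb E[M(t)M(t)^\top]=Kt$. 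The remaining steps --- the semimartingale product rule $d(x_k(t)^2)=2x_k(t-)\,dx_k(t)+d[x_k,x_k](t)$ (exact for quadratic functions, so no third-order jump remainder appears), the observation that the absolutely continuous drift contributes nothing to the quadratic variation, the Lebesgue-a.e.\ identity $x(s-)=x(s)$ coming from the countability of jump times, and the trace identity $\|bK^{1/2}\|_F^2=\trace(bKb^\top)=\sum_{i,j}k_{ij}\,b_i^\top b_j$ --- are all correctly assembled, so your proposal is a valid self-contained substitute for the outsourced proof.
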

\begin{proof}
A proof is given in \cite[Lemma 5.2]{redmannspa2}. 
\end{proof}
We introduce two classical versions of Gronwall's lemma below. 
\begin{lem}[Gronwall lemma -- differential form]\label{gron_dif}
Given $T>0$ let $z: [0, T]\rightarrow \mathbb R$ be differentiable functions and $\beta\in \mathbb R$. Given that \begin{align*}
   \dot z(t)\leq  \beta z(t),\quad t\in[0, T],
   \end{align*}
then for all $t\in[0, T]$, it holds that \begin{align*}
 z(t) \leq z(0) \expn^{\beta t}.                                         
\end{align*}
\end{lem}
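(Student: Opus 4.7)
The plan is to use the classical integrating-factor trick. I would define the auxiliary function $\phi(t) := z(t)\expn^{-\beta t}$, which is differentiable on $[0,T]$ since $z$ is, and compute its derivative directly from the product rule:
\begin{equation*}
\dot \phi(t) = \dot z(t)\expn^{-\beta t} - \beta z(t)\expn^{-\beta t} = \bigl(\dot z(t) - \beta z(t)\bigr)\expn^{-\beta t}.
\end{equation*}
By the hypothesis $\dot z(t) \leq \beta z(t)$ and the strict positivity of $\expn^{-\beta t}$, this gives $\dot \phi(t)\leq 0$ for every $t\in[0,T]$.

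Next, I would invoke the elementary fact that a differentiable function with non-positive derivative on an interval is non-increasing there (which follows from the mean value theorem). Hence $\phi(t)\leq \phi(0) = z(0)$ for all $t\in[0,T]$. Unwinding the definition of $\phi$ and multiplying through by $\expn^{\beta t}>0$ yields $z(t)\leq z(0)\expn^{\beta t}$, which is the claim.

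There is no real obstacle: the choice of integrating factor $\expn^{-\beta t}$ is dictated by the linear-coefficient structure of the inequality, and the only analytic ingredient is the mean value theorem applied to $\phi$. Notice that differentiability of $z$ (as assumed) is essential to form $\dot\phi$ pointwise; if one only had absolute continuity one would need to integrate $\dot\phi\leq 0$ instead, but here the stated smoothness lets the argument proceed in one step. No sign assumption on $z$ or $\beta$ is needed, so the conclusion is valid in full generality as stated.
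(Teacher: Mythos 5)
Your proof is correct: the integrating-factor argument with $\phi(t) = z(t)\expn^{-\beta t}$, the observation that $\dot\phi \leq 0$, and the mean value theorem giving monotonicity is the standard and complete way to establish this. The paper itself offers no proof of this lemma (it is stated as a classical auxiliary result), so there is nothing to compare against; your argument fills the gap faithfully, and your remarks about the role of differentiability and the absence of any sign condition on $z$ or $\beta$ are accurate.
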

The corresponding integral version follows next.
\begin{lem}[Gronwall lemma -- integral form]\label{gronwall}
Given $T>0$ let $z, \alpha: [0, T]\rightarrow \mathbb R$ be continuous functions and $\beta\geq 0$. Given that \begin{align*}
    z(t)\leq \alpha(t)+\int_0^t \beta z(s) ds,\quad t\in[0, T],
   \end{align*}
then for all $t\in[0, T]$, it holds that \begin{align}\label{gronwall1}
    z(t)\leq \alpha(t)+\int_0^t \alpha(s)\beta \expn^{\beta (t-s)}ds.
   \end{align}
If $\alpha$ further is absolutely continuous, we have 
\begin{align}\label{gronwall2}
    z(t)\leq \alpha(0) \expn^{\beta t} + \int_0^t \dot\alpha(s)\expn^{\beta (t-s)}ds,
    \end{align}
where $\dot \alpha$ is the derivative of $\alpha$ Lebesgue almost everywhere.
\end{lem}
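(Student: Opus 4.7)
I would prove the two inequalities in sequence, reducing the integral Gronwall inequality to a linear differential inequality that can be solved by an integrating factor, and then deducing the absolutely-continuous version by integration by parts.

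For \eqref{gronwall1}, the plan is to introduce the cumulative function $w(t):=\int_0^t \beta z(s)\,ds$, which satisfies $w(0)=0$ and is absolutely continuous with $\dot w(t)=\beta z(t)$ almost everywhere. The hypothesis then reads $\dot w(t)\leq \beta\alpha(t)+\beta w(t)$ a.e., which is a first-order linear differential inequality. Multiplying both sides by the integrating factor $\expn^{-\beta t}$ converts the left-hand side into $\frac{d}{dt}\bigl(w(t)\expn^{-\beta t}\bigr)$, and integrating from $0$ to $t$ (using $w(0)=0$) yields
\begin{equation*}
w(t)\expn^{-\beta t}\leq \int_0^t \beta\alpha(s)\expn^{-\beta s}\,ds,\qquad\text{i.e.}\qquad w(t)\leq \int_0^t \beta\alpha(s)\expn^{\beta(t-s)}\,ds.
\end{equation*}
Substituting this back into the trivial bound $z(t)\leq \alpha(t)+w(t)$ delivers \eqref{gronwall1}.

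For \eqref{gronwall2}, assuming in addition that $\alpha$ is absolutely continuous on $[0,T]$, I would transform the remainder term in \eqref{gronwall1} by integration by parts with respect to $s$, exploiting the identity $\frac{d}{ds}\expn^{\beta(t-s)}=-\beta\expn^{\beta(t-s)}$. Since absolute continuity of $\alpha$ legitimates the fundamental theorem of calculus (and hence the product rule up to a Lebesgue-null set), this gives
\begin{equation*}
\int_0^t \alpha(s)\beta\expn^{\beta(t-s)}\,ds = -\alpha(t)+\alpha(0)\expn^{\beta t}+\int_0^t \dot\alpha(s)\expn^{\beta(t-s)}\,ds,
\end{equation*}
and adding $\alpha(t)$ to both sides collapses \eqref{gronwall1} exactly to \eqref{gronwall2}.

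The calculation is essentially routine, and the only genuine obstacle is one of regularity: because $z$ is merely continuous, the auxiliary $w$ is only absolutely continuous rather than $C^1$, so the integrating-factor manipulation must be justified using the product rule and the fundamental theorem of calculus in their absolutely-continuous versions rather than their classical forms. The same remark applies to the integration by parts used in the second step, which is why absolute continuity of $\alpha$ (not just almost-everywhere differentiability) is assumed there. Once these mild regularity points are acknowledged, the computations go through as indicated.
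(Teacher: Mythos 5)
Your proof is correct and, for the second inequality \eqref{gronwall2}, uses exactly the same integration-by-parts computation as the paper; the paper simply declares the first inequality \eqref{gronwall1} to be classical and omits it, whereas you supply the standard integrating-factor argument for it, which is fine. (One small remark: since $z$ is continuous, $w(t)=\int_0^t\beta z(s)\,ds$ is in fact $C^1$ by the fundamental theorem of calculus, so the regularity caveat you raise about $w$ being merely absolutely continuous is unnecessary.)
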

\begin{proof}
The first part is a very classical result and is not proved here. Given that $\alpha$ is absolutely continuous, we can apply integration by parts yielding \begin{align*}
 \int_0^t \alpha(s)\beta \expn^{\beta (t-s)}ds= -\alpha(s) \expn^{\beta (t-s)}\big\vert_0^t + \int_0^t \dot\alpha(s)\expn^{\beta (t-s)}ds.                                                                                                                                                                              \end{align*}
Hence, we obtain \eqref{gronwall2} from \eqref{gronwall1}.
\end{proof}

\section{Proof of Theorem \ref{thm_global_stab}}\label{appendixB}

We define
\begin{align}\label{defineY}
-Y:=(A+c_1I)^\top X+X (A+c_1I)+\sum_{i, j=1}^d N_i^\top X N_j k_{ij}<0.\end{align} 
We apply Lemma \ref{lemstochdiff} to the uncontrolled process $X^{\frac{1}{2}}x(t)$ and obtain 
\begin{align*}
 \frac{d}{dt}\mathbb E\left[x(t)^\top X x(t)\right]&= 2 \mathbb E\left[x(t)^\top X [Ax(t)+ f(x(t))]\right]  + \sum_{i, j=1}^d \mathbb E\left[x(t)^\top N_i^\top X N_j x(t)\right]k_{ij}\\
 &\leq 2\mathbb E\left[x(t)^\top X [Ax(t)+ c_2 I x(t)]\right]  + \sum_{i, j=1}^d \mathbb E\left[x(t)^\top N_i^\top X N_j x(t)\right]k_{ij} \\ &= \mathbb E\bigg[x(t)^\top\Big((A+c_1I)^\top X+ X (A + c_1 I ) + \sum_{i, j=1}^d  N_i^\top X N_j k_{ij}\Big) x(t)\bigg]\\&\quad + 2(c_2-c_1) \mathbb E\left[x(t)^\top X x(t)\right]
 \\
 &=2(c_2-c_1) \mathbb E\left[x(t)^\top X x(t)\right]-\mathbb E\left[x(t)^\top Y x(t)\right]                                                                                                                             \end{align*}
exploiting inequality \eqref{Xmonoton} and inserting \eqref{defineY}.
We define $\underline k$ and  $\overline k$ to be the smallest the largest eigenvalue of $X$, respectively, yielding  $\underline k I\leq X \leq \overline k I$. 
With the smallest eigenvalue $k_Y$ of $Y$ giving $-Y\leq -k_Y I$, we obtain $-\mathbb E\left[x(t)^\top Y x(t)\right] \leq -k_Y \mathbb E\left[x(t)^\top x(t) \right]\leq -\frac{k_Y}{\overline k}\mathbb E\left[x(t)^\top X x(t)\right]$. Setting $\beta:= \frac{k_Y}{\overline k}$, we hence find\begin{align*}
\frac{d}{dt}\mathbb E\left[x(t)^\top X x(t)\right]\leq  (2(c_2-c_1) -\beta)\mathbb E\left[x(t)^\top X x(t)\right].\end{align*}
By the differential version of Gronwall's inequality in Lemma \ref{gron_dif}, we have \begin{align*}
\mathbb E\left[x(t)^\top x(t) \right]&\leq\frac{1}{\underline k}\mathbb 
E\left[x^\top(t) X x(t) \right]\leq \frac{1}{\underline k} x^\top_0 X x_0 
\exp\left\{(2(c_2-c_1) -\beta)t\right\}\\
&\leq \frac{\overline k}{\underline k} x^\top_0 x_0 
\exp\left\{(2(c_2-c_1) -\beta)t\right\}
\end{align*}
concluding the proof.\hfill\qedsymbol

\section*{Acknowledgments}
 MR is supported by the DFG via the individual grant ``Low-order approximations for large-scale problems arising in the context of high-dimensional
PDEs and spatially discretized SPDEs''-- project number 499366908.

\bibliographystyle{abbrv}

\end{document}